\documentclass[a4paper,11pt]{article}
\usepackage[T1]{fontenc}
\usepackage[utf8]{inputenc}
\usepackage{lmodern}
\usepackage[english]{babel}
\usepackage{amsmath}
\usepackage{color}
\usepackage{amsfonts}
\usepackage{amssymb}
\usepackage{amsthm}
\newtheorem{lm}{Lemma}
\newtheorem{prop}{Property}
\newtheorem{thm}{Theorem}
\newtheorem{defi}{Definition}
\newcommand{\ddiff}{\mathbf{d}}
\newcommand{\ima}{\mathrm{i}}

\title{Construction of a Free L\'evy Process as high-dimensional limit of a Brownian Motion on the Unitary Group}
\author{Micha\"el Ulrich}
\author{Micha\"el Ulrich \\\\
  \multicolumn{1}{p{.7\textwidth}}{\centering\emph{Laboratoire de Mathématiques de Besançon\\ Université de Franche-Comté\\ 16, Route de Gray\\ 25 030 Besançon Cedex, France}}\\\\
   \multicolumn{1}{p{.7\textwidth}}{\centering\emph{Institut für Mathematik und Informatik, Ernst-Moritz-Arndt Universität Greifswald, 17487 Greifswald, Germany}}
  }

\begin{document}

\maketitle
\tableofcontents

\begin{abstract}
It is well known that freeness appears in the high-dimensional limit of independence for matrices. Thus, for instance, the additive free Brownian motion can be seen as the limit of the Brownian motion on hermitian matrices. More generally, it is quite natural to try to build free Lévy processes as high-dimensional limits of classical matricial Lévy processes.\newline
We will focus here on one specific such construction, discussing and generalizing the work done previously by Biane in \cite{Bia}, who has shown that the (classical) Brownian motion on the Unitary group $U\left(d\right)$ converges to the free multiplicative Brownian motion when $d$ goes to infinity. We shall first recall that result and give an alternative proof for it. We shall then see how this proof can be adapted in a more general context in order to get a free Lévy process on the dual group (in the sense of Voiculescu) $U\langle n\rangle$. This result will actually amount to a truly noncommutative limit theorem for classical random variables, of the which Biane's result constitutes the case $n=1$.
\end{abstract}

\section{Biane's result about the Brownian motion on the Unitary group}
In all the following, we assume that a unital noncommutative probability space $\left(A,\phi\right)$ be given. Let us remind what we mean by that definition: a unital noncommutative probability space is a couple $(A,\phi)$ where $A$ is a unital $*$-algebra and $\phi$ is a linear functional on $A$ such that $\phi(a^*a)\geq0$ for each $a\in A$ and $\phi(1)=1$.\newline
We will also write by $\delta_{ab}$ Kronecker's symbol, which is equal to $0$ when $a\neq b$ and is equal to $1$ when $a=b$. 
Let us recall following definitions and result:
\begin{defi}
We denote by $(\nu_t)_{t\geq0}$ the same family of measures on the unit circle as in \cite{Bia}, ie $\nu_t$ is the only probability measure such that $\xi_{\nu_t}(z)=z\exp[\frac{1}{2}\frac{1+z}{1-z}]$, where $\xi_{\nu_t}$ is the inverse function of $\frac{\psi_{\nu_t}}{1+\psi_{\nu_t}}$ and $\psi_{\nu_t}=\int\frac{z\zeta}{1-z\zeta}d\nu_t(\zeta)$ where the integration is done on the unit circle.
\end{defi}

\begin{defi}
A free multiplicative Brownian motion is a family $\left(U_t\right)_{t\geq 0}$ such that:
\begin{itemize}
\item For every $0\leq t_1<t_2<\ldots <t_n$, the family $\left(U_{t_1}, U_{t_2}U_{t_1}^{-1},\ldots,U_{t_n}U_{t_{n-1}}\right)$ is free.
\item For every $0\leq s<t$ the element $U_tU_s^{-1}$ has a distribution $\nu_{t-s}$.  
\end{itemize}
\end{defi}
In his paper \cite{Bia}, Biane proved that Brownian motion on the group $U\left(d\right)$ converges, as $d$ goes to infinity, towards a multiplicative free Brownian motion. To do this, he proves first the convergence of the marginals using representation theory arguments and secondly the freeness of the increments. We suggest here that there is an other way to prove the convergence of the marginals based on the Itô formula.\newline
Let us first observe that the Brownian motion on the Unitary group $U\left(d\right)$ can be defined as the unique solution of:
$$\mathbf{d}U_t^{\left(d\right)} = \mathrm{i}\ddiff H_tU_t^{\left(d\right)}-\frac{1}{2}U_t^{\left(d\right)}\ddiff t$$
 with initial condition $U_0=I$. Note that we denote by $\mathrm{i}$ the complex number, so as to differentiate it from the index $i$. In the same way we write $\mathbf{d}$ the differential operator so as to distinguish it from the size of the matrices.
 In this equation, we have noted by $H_t$ a Brownian motion on hermitian matrices defined by:
 \begin{itemize}
   \item The family $(H_{ij}(t))_{1\leq i\leq j\leq d}$ is an independent family of random variables
   \item For $1\leq i\leq d$, we have $H_{ii}(t)$ a gaussian variable $\mathcal{N}(0,\frac{1}{d})$
   \item For $1\leq k\leq j\leq d$, we have $H_{kj}(t)= H^{(1)}_{kj}(t)+\mathrm{i}H^{(2)}_{kj}(t)$ with $H^{(1)}_{kj}(t)$ and $H^{(2)}_{kj}(t)$ two independent gaussian variables $\mathcal{N}(0,\frac{1}{2d})$
   \item The matrix $H(t)$ is hermitian for each $t$. 
 \end{itemize}
 In particular this means that each entry of $H_t$ is of variance $1/d$.\newline
 
 Note: we shall omit the exponent $\left(d\right)$ when there is no confusion possible.\newline
Let us now denote by $f_{k_1,\ldots,k_r}$ the following function of $t$: $$f_{k_1,\ldots,k_r}=\mathbb{E}\left[tr\left(U_t^{k_1}\right)\ldots tr\left(U_t^{k_r}\right)\right]$$ where the trace is normalized\footnote{The convention we adopt in this paper is following: whenever we mean the normalized trace, we write $tr$ and we write $Tr$ whenever we speak of the usual trace.} by $1/d$. We will find a differential equation involving those functions. 
\begin{lm} \label{lmfondamental}
We have the following formula:
\begin{eqnarray*}
\ddiff \left(U_{i_1j_1}\ldots U_{i_rj_r}\right)&=&\text{ martingale }-\frac{1}{2}\sum_{k=1}^rU_{i_1j_1}\ldots U_{i_rj_r}\ddiff t\\&-&\frac{\ddiff t}{d}\sum_{1\leq p<q\leq r}U_{i_1j_1}\ldots U_{i_pj_q}\ldots U_{i_qj_p}\ldots U_{i_rj_r} 
\end{eqnarray*}
This means that the non-martingale part is constituted by two terms, the first one where nothing is changed in the indices and the second one where you have switched two indices: $j_q$ replaces $j_p$ and $j_p$ replaces a $j_q$.
\end{lm}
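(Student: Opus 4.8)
\emph{Proof idea.} The plan is to apply Itô's formula to the product of the $r$ scalar complex-valued semimartingales $t\mapsto U_{i_1j_1}(t),\dots,t\mapsto U_{i_rj_r}(t)$, which commute with one another since they are ordinary complex processes. First I would extract from the defining equation $\ddiff U_t=\ima\,\ddiff H_t\,U_t-\frac12 U_t\,\ddiff t$ the stochastic differential of a single entry,
\[
\ddiff U_{ij}=\ima\sum_{k=1}^d\ddiff H_{ik}\,U_{kj}-\frac12 U_{ij}\,\ddiff t ,
\]
so that the martingale part of $U_{ij}$ is $\ima\sum_k\ddiff H_{ik}U_{kj}$ and its bounded-variation part is $-\frac12 U_{ij}\,\ddiff t$.

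The central computation is the Itô product of the martingale parts of two entries. From the description of $H_t$ one obtains for the Hermitian Brownian motion the single formula $\ddiff H_{ab}\cdot\ddiff H_{cd}=\frac1d\,\delta_{ad}\delta_{bc}\,\ddiff t$ (the only nonvanishing such products being those of $H_{ab}$ with $H_{ba}$, and this one formula reproduces both the diagonal and the off-diagonal cases). Consequently
\[
\ddiff U_{i_pj_p}\cdot\ddiff U_{i_qj_q}=\ima^2\sum_{k,l}U_{kj_p}U_{lj_q}\,\ddiff H_{i_pk}\cdot\ddiff H_{i_ql}=-\frac1d\,U_{i_pj_q}U_{i_qj_p}\,\ddiff t ,
\]
the product of the two Kronecker deltas $\delta_{i_pl}\delta_{ki_q}$ forcing $l=i_p$ and $k=i_q$. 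Products involving a bounded-variation factor contribute nothing, so this is the only second-order term.

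Then I would feed this into the multiplicative Itô formula for the $r$ commuting semimartingales,
\[
\ddiff\Bigl(\prod_{k=1}^r U_{i_kj_k}\Bigr)=\sum_{p=1}^r\Bigl(\prod_{k\neq p}U_{i_kj_k}\Bigr)\ddiff U_{i_pj_p}+\sum_{1\le p<q\le r}\Bigl(\prod_{k\neq p,q}U_{i_kj_k}\Bigr)\,\ddiff U_{i_pj_p}\cdot\ddiff U_{i_qj_q}.
\]
In the first sum the martingale parts assemble into the martingale of the statement, while each bounded-variation part regenerates the full product with coefficient $-\frac12$, giving the first drift term $-\frac12\sum_{p=1}^r U_{i_1j_1}\cdots U_{i_rj_r}\,\ddiff t$. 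Substituting the bracket computed above into the second sum and reinserting the two commuting scalars $U_{i_pj_q}$ and $U_{i_qj_p}$ at positions $p$ and $q$ produces exactly $-\frac{\ddiff t}{d}\sum_{1\le p<q\le r}U_{i_1j_1}\cdots U_{i_pj_q}\cdots U_{i_qj_p}\cdots U_{i_rj_r}$, which is the term with the two column indices switched.

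The main difficulty here is purely bookkeeping: getting the covariance rule $\ddiff H_{ab}\cdot\ddiff H_{cd}=\frac1d\delta_{ad}\delta_{bc}\,\ddiff t$ exactly right, with the correct normalisation arising from the variances $1/d$ and $1/(2d)$ and the Hermitian symmetry, and then carefully tracking which of the column indices $j_p,j_q$ lands in which factor once the double sum over $k,l$ is contracted. Once those points are settled the remainder is a routine, if notation-heavy, expansion.
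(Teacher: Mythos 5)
Your proposal is correct and follows essentially the same route as the paper: apply Itô's product formula entry-wise, use the covariance structure of the Hermitian Brownian motion to evaluate the brackets $\ddiff U_{i_pj_p}\cdot\ddiff U_{i_qj_q}$, and assemble the drift and switch terms. Your unified covariance rule $\ddiff H_{ab}\cdot\ddiff H_{cd}=\frac1d\delta_{ad}\delta_{bc}\,\ddiff t$ is a slightly cleaner packaging of the paper's case-by-case computation, and it correctly yields the factor $-\frac1d$ in the bracket.
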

\begin{proof}
This is obtained by using Itô's formula and by reasoning for each element in the matrix, because:
$$\ddiff \left(U_{i_1j_1}\ldots U_{i_rj_r}\right)=\sum_{k=1}^r U_{i_1j_1}\ldots \left(\ddiff U_{i_kj_k}\right)\ldots U_{i_rj_r}+\sum_{1\leq k<l\leq r}\prod_{s\neq k,l}U_{i_sj_s}\ddiff\left[U_{i_kj_k},U_{i_lj_l}\right]$$
The $[.,.]$ denotes the quadratic variation. We remark that:
$$\forall{i,j}, \ddiff U_{ij}(t)=\mathrm{i}\sum_{r=1}^d\ddiff H_{ir}U_{rj}-\frac{1}{2}U_{ij}\ddiff t$$
and
\begin{eqnarray*}
\ddiff[H_{i_kr_k},H_{i_lr_l}]&=&\ddiff[H_{i_kr_k}^{(1)}+\ima H_{i_kr_k}^{(2)},H_{i_lr_l}^{(1)}+\ima H_{i_lr_l}^{(2)}]\\
\end{eqnarray*}
But we know that the quadratic variation of two processes is zero if they are independent. Thus, $\ddiff[H_{i_kr_k},H_{i_lr_l}]$ is equal to:
\begin{itemize}
\item If $i_k=i_l$ and $j_l=j_k$, $\ddiff[H_{i_kj_k},H_{i_lj_l}]=\frac{1}{2d}-\frac{1}{2d}=0$
\item If $i_k=j_l$ and $j_k=i_l$, $\ddiff[H_{i_kj_k},H_{i_lj_l}]=\frac{1}{2d}+\frac{1}{2d}=\frac{1}{d}$
\item And it is equal to zero in all other cases.
\end{itemize}
And thus, the quadratic variation can be expressed as:
\begin{eqnarray*}
  \ddiff[U_{i_kj_k},U_{i_lj_l}]&=&\ima\sum_{r_l,r_k=1}^d U_{r_kj_k}U_{r_lj_l}\ddiff[H_{i_kr_k},H_{i_lr_l}]+\text{martingale}\\
  &=&\ima U_{i_lj_k}U_{i_kj_l}
\end{eqnarray*}

\end{proof}

When we take the expectation, the martingale part vanishes.\newline
If we expand $f_{k_1,\ldots,k_r}$, we get:
\begin{eqnarray*}
  f_{k_1,\ldots,k_r}&=&\frac{1}{d^r}\mathbb{E}[\sum_{\substack{i_1^1,\ldots,i_{k_1}^1\\\ldots\\i_1^r,\ldots,i_{k_r}^r}=1}^d U_{i_1^1i_2^1}\ldots U_{i_{k_1}^1i_1^1}\ldots U_{i_1^ri_2^r}\ldots U_{i_{k_r}^ri_1^r}]
\end{eqnarray*} 
To get a system of differential equations we will use the former formula that we have obtained thanks to Itô's Lemma. Especially we must see how the last term, switching $p$ and $q$, can be rewritten in terms of the functions $f_{k_1,\ldots,k_r}$. There are actually two cases to study: first when $p$ and $q$ come from the same trace and second when they come from different traces.\newline
\textbf{When they come from the same trace:} If for instance $p$ and $q$ both come from the $m$\textsuperscript{th} trace, the contribution of this trace is of the kind:
$$\frac{1}{d^r}\ldots U_{i_1^mi_2^m}\ldots U_{i_p^mi_{p+1}^m}\ldots U_{i_q^mi_{q+1}^m}\ldots U_{i_{k_m}^mi_1^m}\ldots$$
 
So when we do the switching it yields:
$$\frac{1}{d^r}\ldots U_{i_1^mi_2^m}\ldots U_{i_p^mi_{q+1}^m}\ldots U_{i_q^mi_{p+1}^m} U_{i_{q+1}^mi_{q+2}^m}\ldots U_{i_{k_m}^mi_1^m}$$
And when we sum over all those indices we see that we actually get: $df_{k_1,\ldots,k_m-\left(q-p\right),q-p,\ldots,k_r}$, ie the switching has produced one more trace.\newline
\textbf{When they come from two different traces:} We shall here suppose that $p$ comes from the $u$\textsuperscript{th} trace and $q$ comes from the $v$\textsuperscript{th} trace, with $u<v$. The contribution of those two traces are:
$$\frac{1}{d^r}\ldots U_{i_1^ui_2^u}\ldots U_{i_p^ui_{p+1}^u}\ldots U_{i_{k_u}^ui_1^u}\ldots U_{i_1^vi_2^v}\ldots U_{i_q^vi_{q+1}^v}\ldots U_{i_{k_v}^vi_1^v}\ldots$$
Switching $p$ and $q$ yields to:
$$\frac{1}{d^r}\ldots U_{i_1^ui_2^u}\ldots U_{i_p^ui_{q+1}^v}\ldots U_{i_{k_u}^ui_1^u}\ldots U_{i_1^vi_2^v}\ldots U_{i_q^vi_{p+1}^u} U_{i_{q+1}i_{q+2}}\ldots U_{i_{k_v}^vi_1^v}\ldots$$
And so if we sum over all indices we see that we get $\frac{1}{d}f_{k_1,\ldots k_u+k_v,\ldots k_r}$, ie we have merged two traces together.\newline

So, if we put it all together we see by using Lemma \ref{lmfondamental}  that the system of differential equations we get is:
\begin{eqnarray*}
  f^\prime_{k_1,\ldots,k_r}&=&-\frac{k_1+\ldots k_r}{2}f_{k_1,\ldots,k_r}-\sum_{\kappa=1}^r\sum_{l=1}^{k_\kappa}\left(k_\kappa-l\right)f_{k_1,\ldots,k_\kappa-l,l,\ldots,k_r}\\
  &-&\frac{1}{d^2}\sum_{1\leq\kappa<\lambda\leq r}\sum_{p=1}^{k_\kappa}\sum_{q=1}^{k_\lambda}f_{k_1,\ldots,k_\kappa+k_\lambda,\ldots}
\end{eqnarray*}

Let us observe here that we have a nice combinatorial structure for these equations. Indeed, we can interpret $(k_1,\ldots,k_r)$ as an integer partition for the integer $k_1+\ldots+k_r$. By doing so, we see that the equation only involves partitions for the same integer because we either split an integer into two parts or we merge two integers into one. These equations thus have the same structure as the equations in Proposition $2.3$ in \cite{TLevy} via the identification between a permutation and the length of the cycles of its canonical decomposition.\newline
Let us also note that an integer $l$ has only finitely many partitions.\footnote{Without going into the details of the theory of integer partitions, we may find a gross upper bound for this number in the following way: A partition of $l$ cannot have more than $l$ parts. So let's consider a line consisting of $l+l-1=2l-1$ boxes. We then put crosses in $l-1$ boxes. Each such cross helps separate two parts of the partition. For instance:
\setlength{\unitlength}{0.5cm}
\begin{picture}(8,2)(0,0)
  \put(0, 0){\line(1,0){7}}
  \put(0,0){\line(0,1){1}}
  \put(1,0){\line(0,1){1}}
  \put(2,0){\line(0,1){1}}
  \put(3,0){\line(0,1){1}}
  \put(4,0){\line(0,1){1}}
  \put(5,0){\line(0,1){1}}
  \put(6,0){\line(0,1){1}}
  \put(7,0){\line(0,1){1}}
  \put(1,0){\line(1,1){1}}
  \put(1,1){\line(1,-1){1}}
  \put(2,0){\line(1,1){1}}
  \put(2,1){\line(1,-1){1}}
  \put(4,0){\line(1,1){1}}
  \put(4,1){\line(1,-1){1}}
\end{picture} 
represents the partition $(1,1,2)$ of the integer $4$. Hence we see that the number of such partitions is bounded by ${ {2l-1}\choose{l-1}}$, which is finite.}
So that means that each function is involved in a system of finitely many linear differential equations with fixed initial conditions. 
\newline
What can we say about the convergence of this family of functions? We actually have that for each $r\geq1$ and every $k_1,\ldots,k_r\geq1$, the function $f^{\left(d\right)}_{k_1,\ldots,k_r}$ converges, as $d$ goes to infinity, towards a function $f_{k_1,\ldots,k_r}$ verifying:
$$f^\prime_{k_1,\ldots,k_r}=-\frac{k_1+\ldots +k_r}{2}f_{k_1,\ldots,k_r}-\sum_{\kappa=1}^r\sum_{l=1}^{k_\kappa}\left(k_\kappa-l\right)f_{k_1,\ldots,k_\kappa-l,l,\ldots,k_r}$$
Indeed, let us fix such a partition $k_1+\ldots+k_r=k$. If we note $P(k):=\left\{(k_1,\ldots,k_r)|r\geq0,k_1+\ldots+k_r=k\right\}$ the set of partitions of the integer $k$, we have just shown that this set if finite. The function $f^{\left(d\right)}_{k_1,\ldots,k_r}$ thus only shows up in a finite number of linear differential equations with constant coefficients. This finite number of differential equations can be rewritten in a matricial form: let $\varPhi_t^{(d)}$ be a vector in $\mathbb{C}^{\sharp P(k)}$ consisting of all functions $f^{\left(d\right)}_{p_1,\ldots,p_l}$ where $p_1,\ldots,p_l$ is a partition of the same integer $k$. Then $\varPhi^{(d)}$ is solution of a differential equation of the form: $$(\varPhi^{(d)})^\prime=A^{(d)}\varPhi^{(d)}$$
where $A^{(d)}$ is a (constant) matrix formed with the coefficients of our differential equations. It is well-known that $\varPhi^{(d)}$ is thus of the form $\varPhi^{(d)}=\varPhi^{(d)}_0e^{A^{(d)}t}$. But the coefficients of the equations for $f^{(d)}$, namely $A^{(d)}$, converge towards the coefficients for the equation of $f$, namely $A$,  and thus $\varPhi^{(d)}$ converges towards $\varPhi$, or in other words, $f^{\left(d\right)}_{k_1,\ldots,k_r}$ converges towards $f_{k_1,\ldots,k_r}$.

We will now denote by $F_{k_1,\ldots,k_r}$ the function $\phi\left(u_t^{k_1}\right)\ldots\phi\left(u_t^{k_r}\right)$ where $u$ is here a free multiplicative Brownian motion. To prove the convergence of the marginals it will be enough to prove that the family of functions $F$ verifies the differential equations system:
$$F^\prime_{k_1,\ldots,k_r}=-\frac{k_1+\ldots k_r}{2}F_{k_1,\ldots,k_r}-\sum_{\kappa=1}^r\sum_{l=1}^{k_\kappa}\left(k_\kappa-l\right)F_{k_1,\ldots,k_\kappa-l,l,\ldots,k_r}$$
Indeed, if we have proven it, then it implies that for all $r\geq1$ and all $0\leq t_1\leq\ldots\leq t_r$ the function $f_{t_1,\ldots,t_r}^{\left(d\right)}$ converges towards $F_{t_1\ldots t_r}$ when $d$ goes to infinity. In particular, if we take $r=1$, we see that we have the convergence of the marginals (in moments).\newline
In order to prove that formula we must remark that a free multiplicative Brownian motion is given by a free stochastic equation with initial conditions $u_0=1$ ($1$ is the unit element of $A$):
$$\ddiff u_t=\ima \ddiff X_tu_t-\frac{1}{2}u_t\ddiff t$$
where $X_t$ is a free additive Brownian motion. This result is stated in \cite{Bia}'s Theorem 2. We will simplify the calculations by putting $V_t:=e^{t/2}u_t$. 
Using the free analogue of Itô's Lemma (see e.g. \cite{KS}, Theorem 5), Biane demonstrated following formula
\begin{eqnarray*}
  \ddiff V_t^n&=&\ima\sum_{k=0}^nV_t^k\ddiff X_tV_t^{n-k}-\sum_{k=1}^{n-1}kV_t^k\phi\left(V_t^{n-k}\right)\ddiff t
\end{eqnarray*}
In other words this means:
$$\ddiff u_t^n=\ima \sum_{k=0}^nu_t^k\ddiff X_tu_t^{n-k}-\sum_{k=1}^{n-1}ku_t^k\phi\left(u_t^{n-k}\right)\ddiff t-\frac{n}{2}u_t^n\ddiff t$$
Taking the trace of it we obtain:
$$\phi\left(u_t^n\right)^\prime=-\sum_{k=1}^{n-1}k\phi\left(u_t^k\right)\phi\left(u_t^{n-k}\right)-\frac{n}{2}\phi\left(u_t^n\right)$$
And so it finally yields the following system of differential equations:
\begin{eqnarray*}
  F_{k_1,\ldots k_r}^\prime&=&-\frac{k_1+\ldots+k_r}{2}F_{k_1\ldots k_r}-\sum_{\kappa=1}^r\sum_{p=1}^{k_\kappa-1}pF_{k_1,\ldots,p,k_\kappa-p,\ldots,k_r}
\end{eqnarray*}
And this is exactly the system we wanted because $F_{k_1,\ldots,p,k_\kappa-p,\ldots}=F_{k_1,\ldots,k_\kappa-p,p,\ldots}$. \newline
To put it in a nutshell: we were able to reprove Biane's result by using a different method (by comparing systems of differential equations) to prove the convergence of marginals. The freeness of the increments can still be proven as did Biane but it will also follow from the results of section $4$. We will now try to use that alternative method to generalize Biane's result. To do that we will need the concept of dual groups.

\section{Dual groups in the sense of Voiculescu and Lévy processes}
We will here briefly introduce dual groups as they were first defined by Voiculescu. For more information on this subject one can read \cite{Vo}. In the sequel we denote by $\sqcup$ the free product of unital $*$-algebras.
\begin{defi}[Dual semigroups]
A (unital) dual semigroup is a triple $\left(B,\Delta,\delta\right)$ where $B$ is a $*$-algebra and $\Delta:B\rightarrow B\sqcup B$ and $\delta:B\rightarrow\mathbb{C}$ are $*$-homomorphisms such that
$$\left(\Delta\bigsqcup id_B\right)\circ\Delta=\left(Id_B\bigsqcup \Delta\right)$$
$$\left(\delta\bigsqcup id_B\right)\circ\Delta=id_B=\left(Id_B\bigsqcup\delta\right)\circ\Delta$$
The former property is called coassociativity, whereas the latter is the counit property.\newline
When considering the free product $B\bigsqcup B$, in order to differentiate between elements coming from the $B$ on the left and elements coming from the $B$ on the right, we will talk about the left and the right legs of $B\bigsqcup B$.
\end{defi}
We shall be in this paper particularly interested in one dual group:
\begin{defi}[Unitary Dual Group]
For $n\geq1$, we call Unitary Dual Group the dual group $\left(U\langle n\rangle,\Delta,\delta\right)$ defined by:
\begin{itemize}
\item The $*$-algebra $U\langle n\rangle$ is generated by $n^2$ generators $\left(u_{ij}\right)_{1\leq i,j\leq n}$ verifying the relations:
$$\forall{1\leq i,j\leq n}\text{ }\sum_{k=1}^nu_{ki}^*u_{kj}=\delta_{ij}=\sum_{k=1}^nu_{ik}u_{jk}^*$$
\item The coproduct is given by:
$$\Delta u_{ij}=\sum_{k}u_{ik}^{(1)}u_{kj}^{(2)}$$
where the exponent $(1)$ (resp. $(2)$) indicates that the element is taken from the left (resp. right) leg of $U\langle n\rangle\sqcup U\langle n\rangle$.
\item The counit is given by: $\delta u_{ij}=\delta_{ij}$. 
\end{itemize}
\end{defi}
Dual semigroups are particularly useful to define free Lévy processes in the most general case.
\begin{defi}[Lévy processes]
We shall assume that we have a dual semigroup $\left(B,\Delta,\delta\right)$ and some unital noncommutative probability space $\left(A,\phi\right)$. A free (resp. tensor independent) Lévy process on the semigroup $B$ over the noncommutative probability space $(A,\phi)$ is a family $\left(j_{s,t}\right)_{0\leq s\leq t}$ of $*$-homomorphisms from $B$ to $A$ such that:
\begin{itemize}
\item (Increment Property) For every $0\leq s\leq t\leq r$ we have:
$$\left(j_{st}\sqcup j_{tr}\right)\circ\Delta=j_{sr}$$
\item (Stationarity) We have for every $0\leq s\leq t$: $j_{0,t-s}=j_{s,t}$
\item (Freeness of the Increments) For every $0\leq t_1\leq t_2\leq\ldots\leq t_{n+1}$, the increments $j_{t_1t_2},\ldots,j_{t_nt_{n+1}}$ are free (resp. tensor independent).
\item (Weak continuity) For each $b\in B$ and each $s\geq0$, we have: $\lim_{t\rightarrow s^+}\phi\circ j_{s,t}(b)=\delta(b)$
\end{itemize}
\end{defi}
How can these concepts be applied in our case? We could generalize Biane's question by taking $U^{\langle d\rangle}_t$ a Brownian motion on the Unitary Group $U(nd)$, where $n$ is a fixed integer. The matrix $U_t^{\langle d\rangle}$ can be decomposed in $n^2$ blocks of size $d\times d$. In the sequel of the article we will denote by $[U_t^{\langle d\rangle}]_{ij}$ the $(i,j)$\textsuperscript{th} block of our Brownian motion. For each $d$ we thus get a quantum stochastic process on the Dual Unitary Group by setting for $0\leq s\leq t$:
\begin{eqnarray*}
  j_{st}^{\langle d\rangle)}&:&U\langle n\rangle\rightarrow (A,\phi)\\
  &&u_{ij}\mapsto [U_t^{\langle d\rangle}]_{ij}
\end{eqnarray*}
We will in the sequel of the article omit the exponent $\langle d\rangle$ whenever no confusion can arise.\newline
The question that is natural to ask and that generalizes Biane's result is whether or not $j_{st}$ converges to a Lévy process on $U\langle n\rangle$ in the limit when $d$ goes to infinity.\newline
We will show that we have following result
\begin{thm}[Main Theorem]\label{maintheorem}
We assume that $\phi$ is tracial.\newline
Let $X=(X_{ij})_{1\leq i,j\leq n}$ be a matrix whose entries are free stochastic variables verifying that:
\begin{itemize}
\item For each $i$, $X_{ii}$ is an additive free Brownian motion.
\item For every $i\neq j$, $X_{ij}=X_{ij}^{(1)}+\ima X_{ij}^{(2)}$ with $\sqrt{2}X_{ij}^{(1)}$ and $\sqrt{2}X_{ij}^{(2)}$ who are two additive free Brownian motions who are free one with another. 
\item For each $i,j$ we have $X_{ij}=X_{ji}^*$.
\item The family $(X_{ij})_{1\leq i\leq j\leq n}$ is free.
\end{itemize}
Let also $\Psi=(\Psi_{ij})$ be a free stochastic process defined by the free stochastic equation with initial condition $\Psi_0=I$: 
$$d\Psi_t=\frac{\ima}{\sqrt{n}} dX_t\Psi_t-\frac{1}{2}\Psi_tdt$$
Through $\Psi$ we may define a free Lévy process $J$ through\footnote{By calculating $d(\sum_k \Psi_{ki}^*\Psi_{kj})$ we find zero. Moreover, when we calculate $d(\sum_k\Psi_{ik}\Psi^*_{jk})$ we find a free stochastic diffenrential equation that is verified by the constant $\delta_{ij}$. By unicity of the solution (see e.g. \cite{KS}[Theorem $4$]), we have that $\sum_k\Psi_{ik}\Psi^*_{jk}=\delta_{ij}$. Thus $J_{st}$ respects the defining relations of $U\langle n\rangle$}:
\begin{eqnarray*}
  J_{st}&:&U\langle n\rangle\rightarrow (A,\phi)\\
  &&u_{ij}\mapsto \Psi_{ij}
\end{eqnarray*}
Then, $(j_{st}^{\langle d\rangle)})$ converges towards $(J_{st})$ as $d$ goes to infinity.
\end{thm}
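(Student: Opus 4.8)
The plan is to follow the strategy of Section~1: first extract, via Itô's formula, a closed finite linear system of ordinary differential equations satisfied on the matrix side by the relevant mixed moments of the blocks of $U_t^{\langle d\rangle}$, then show that this system converges coefficientwise as $d\to\infty$, and finally check that the same limiting system — with the same initial data — is satisfied by the corresponding mixed moments of $\Psi$, computed through the free Itô formula.

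On the matrix side I would write the Brownian motion on $U(nd)$ block-wise. With $H$ the hermitian Brownian motion of size $nd$ (each entry of variance $1/(nd)$) and $[H]_{ij}$ its $d\times d$ blocks, one has $\ddiff[U_t]_{ij}=\ima\sum_{k}\ddiff[H_t]_{ik}\,[U_t]_{kj}-\tfrac12[U_t]_{ij}\,\ddiff t$, and a similar linear equation for the blocks of $U_t^{-1}$, which are the adjoints $[U_t]_{ji}^{*}$. The test functions are the quantities $\mathbb{E}\bigl[tr(W_1)\cdots tr(W_r)\bigr]$, where $tr$ is the normalized trace on $d\times d$ matrices and each $W_m$ is a word in the blocks $[U_t]_{ij}$ and their adjoints. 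Applying the entrywise Itô formula as in Lemma~\ref{lmfondamental}, one gets for each such function a drift term $-\tfrac12(\text{total length})$ times itself, plus bracket contributions coming from pairs of factors: a bracket of two ``$U$-type'' factors, or of two ``$U^{-1}$-type'' factors, performs the index switch of Lemma~\ref{lmfondamental}, which either splits one trace into two or merges two traces into one, while a mixed $U/U^{-1}$ bracket is essentially scalar and simply shortens a word. In every case the total length does not increase, so for a fixed total length only finitely many of these functions occur, and they solve a finite system $(\varPhi^{\langle d\rangle})'=A^{\langle d\rangle}\varPhi^{\langle d\rangle}$ with fixed initial data. Exactly as in Section~1, the contributions that couple two distinct traces (the ``merge'' terms) carry an extra factor $1/d^{2}$ and drop out in the limit; hence $A^{\langle d\rangle}$ converges to a matrix $A$, and solving the linear system shows that every $\mathbb{E}\bigl[tr(W_1)\cdots tr(W_r)\bigr]$ converges. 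Taking $r=1$, $\mathbb{E}[tr(W)]$ converges for every word $W$, i.e. $\phi\circ j_{0t}^{\langle d\rangle}$ converges pointwise on $U\langle n\rangle$.

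To identify the limit, I would run the analogous computation for $\Psi$ with the free Itô formula of \cite{KS} (applied also to $\Psi^{*}$), using that $\phi$ is tracial, obtaining a system of ODEs for $\phi(\text{word in }\Psi_{ij},\Psi_{ij}^{*})$ and products thereof. The normalizations built into $X$ — a free Brownian motion for each $X_{ii}$ and $\sqrt2\,X_{ij}^{(1)},\sqrt2\,X_{ij}^{(2)}$ for the off-diagonal real and imaginary parts — together with the $1/\sqrt n$ scaling are tuned precisely so that the free brackets $\ddiff X\,\ddiff X$ reproduce, term for term, the surviving (non-merge) contributions of the matrix computation; the merge terms have no free analogue, in keeping with the fact that freeness of $X$ kills exactly the crossing pairings. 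Hence the family $F_{W_1,\ldots,W_r}(t)=\phi(W_1)\cdots\phi(W_r)$ obtained from $\Psi$ solves the same limiting system with the same initial conditions and therefore equals the limit found above, so $\phi\circ j_{0t}^{\langle d\rangle}\to\phi\circ J_{0t}$. Since the statement concerns the whole process, this convergence of one-time marginals must be upgraded to convergence of all mixed moments $\phi\bigl(j_{s_1t_1}^{\langle d\rangle}(b_1)\cdots\bigr)$: using the increment property $(j_{st}\sqcup j_{tr})\circ\Delta=j_{sr}$ one sees that the blocks of the increment $j_{st}^{\langle d\rangle}$ are the blocks of $U_s^{-1}U_t$, hence independent of $U_s$ and distributed as $U_{t-s}^{\langle d\rangle}$, and one then checks — as Biane did, or as a consequence of the asymptotic freeness results of Section~4 — that these increments become free in the limit. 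That $J$ is itself a free Lévy process on $U\langle n\rangle$ follows from the footnote (it respects the defining relations) together with uniqueness of solutions of the free SDE, which yields the increment property and stationarity via the time-factorization of $\Psi$ along the free increments of $X$, and weak continuity at $0$.

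The main obstacle I expect is the combinatorial bookkeeping of the two Itô computations once block indices and adjoints are present: one must verify that the matrix-side system genuinely closes at each fixed total length, that its split and mixed-bracket terms match the free-side terms coefficient-for-coefficient under the chosen normalizations, and that every remaining matrix-side term is $O(1/d^{2})$. A secondary, more conceptual point is to set up the correct notion of convergence for these quantum stochastic processes and to reduce multi-time correlations to one-time convergence plus asymptotic freeness of the increments.
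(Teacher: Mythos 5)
Your treatment of the one-time marginals is essentially the paper's: the same family of trace-product test functions, the same It\^o-derived finite linear system whose trace-merging terms carry an extra $1/d^{2}$ and drop out, and the same comparison with the system obtained for $\Psi$ from the free It\^o formula. (One small inaccuracy there: a mixed $U/U^{*}$ bracket does not ``shorten a word''; in the paper it produces a Kronecker delta, a summed index $\tau$, and a redistribution of the word into two traces, so the total length is preserved — this matters for checking that the system closes.) The genuine gap is in how you pass from one-time marginals to the full process. You propose to reduce multi-time mixed moments to marginal convergence plus asymptotic freeness of the increments, ``as Biane did, or as a consequence of the asymptotic freeness results of Section 4''. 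But there is no independent asymptotic-freeness result to invoke: Section 4 \emph{is} the proof of the multi-time convergence, and the paper states explicitly that freeness of the increments is a \emph{consequence} of that section rather than an input, so your argument is circular as written. The fallback ``as Biane did'' is not automatic either: for $n\geq 2$ what you would need is asymptotic freeness of the \emph{families of $d\times d$ blocks} of independent increments of the $U(nd)$-Brownian motion, i.e., asymptotic freeness relative to a fixed block decomposition (equivalently, joint asymptotic freeness of the increments from the constant matrices $E_{ij}\otimes I_d$), together with an identification of the resulting limit with the increment structure of $J$ coming from the free SDE. None of that is supplied, and it is precisely the hard part of the theorem.

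What the paper does instead is a direct recursion on $\sigma=\sharp\left\{t_1,\ldots,t_k\right\}$, the number of distinct times appearing in a mixed moment. It enlarges the index sets so that arbitrary $\mathcal{F}_{s}$-measurable $d\times d$ matrices (respectively, elements of the $*$-algebra $\mathcal{A}_s$ generated by $\Psi(u)$, $u\leq s$) may be inserted between the blocks, derives the corresponding generalized linear systems in the latest time variable (Lemmas \ref{systequadiff} and \ref{freesystem}), and notes that the matrix-side system converges coefficientwise to the free-side system while the inserted coefficients converge in $*$-moments by the induction hypothesis; this yields convergence of all multi-time moments without ever invoking asymptotic freeness. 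To complete your proof you must either establish the block-level asymptotic freeness statement from scratch (a nontrivial extension of Voiculescu's and Biane's results) and then match the limit with $J$, or adopt this conditional-expectation recursion; as it stands the multi-time step is missing.
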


\section{Convergence of the marginals}
We will first study the convergence of the marginals. Hence we will fix in this section a $t\geq 0$. To prove such a convergence we must study the moments of the type $\phi\circ j_{0t}(u_{i_1j_1}^{\epsilon_1}\ldots u_{i_rj_r}^{\epsilon_r})$, where $\epsilon_1,\ldots,\epsilon_r\in\left\{\varnothing,*\right\}$. For convenience, we will identify $\varnothing$ with $0$ and $*$ with $1$. We will use exactly the same method as in the first section but, because there are $n^2$ variables, we will have many more indices.\newline

\subsection{Notations}
We consider the dual group $U\langle n\rangle$ which is generated by $n^2$ variables. We will need to introduce some notations to describe all the indices that will be involved.\newline
From now on and until the end of the paper, when we have a matrix $M\in\mathcal{M}_{nd}(\mathbb{C})$, we will denote:
\begin{itemize}
\item by $M_{ij}$ the $(i,j)$-matrix entry of $M$.
\item by $[M]_{ij}$ the $(i,j)$-block of size $d\times d$ of the matrix $M$
\end{itemize}
We denote by $[\mathcal{I}]$ the set $[\mathcal{I}]=\left\{1,\ldots,n\right\}^2\times \left\{0,1\right\}$. For such a triple $\alpha=(i,j,\epsilon)$, we will denote $[U]_\alpha$ the $d\times d$ block $[U]^\epsilon_{ij}$ where we identity $\epsilon=1$ with $*$ and $\epsilon=0$ with $\varnothing$.\newline
 We denote by $\mathcal{I}$ the set $\mathcal{I}=\left\{1,\ldots,nd\right\}^2\times \left\{0,1\right\}$. For such a triple $\rho=(\mu,\nu,\omega)$, we will denote $U_\rho$ the coefficient $U_{\mu\nu}$ if $\omega=0$ and the coefficient $\bar{U}_{\mu\nu}$ if $\omega=1$.\newline
When $\Psi$ is in $\mathcal{M}_n(\mathcal{A})$, with $\mathcal{A}$ a $*$-algebra, we denote by $\Psi_{\alpha}$ the element $\Psi_{ij}^\epsilon$.

\subsection{A system of differential equations for the Brownian motion on $U(nd)$}

To achieve our purpose we need to consider the family of functions (as always, we will omit the exponents everytime we may do so without risk):
\begin{eqnarray*}
  &&\gamma^{\langle d\rangle}_{\alpha_{11},\ldots,\alpha_{k_11};\ldots;\alpha_{1r},\ldots,\alpha_{k_rr}}\\
  &=&\mathbb{E}[tr([U]_{\alpha_{11}}\ldots [U]_{\alpha_{k_11}})\ldots tr(\ldots [U]_{\alpha_{k_rr}})]
\end{eqnarray*}
where $r\geq 1; k_1,\ldots,k_r\in\mathbb{N},\alpha_{kl}\in[\mathcal{I}]$.\newline
In other words, we take functions very similar to what we had before in the simpler case of the convergence to Biane's result. They still are the product of traces\footnote{The renormalization is here done with a coefficient $1/d$.}. The difficulty arises here from the fact that we consider blocks and that we thus have to consider all possible products of the blocks and their adjoints. The indices we use specify which $U_{ij}$ appear and if they have a $*$ or not and the semicolumns separate two traces. We will, as previously, try to find a system of differential equations. Let us fix the indices $\alpha_{11}\ldots \alpha_{k_rr}$. \newline
Again, we apply Lemma \ref{lmfondamental} in order to calculate the differential equation. For the sake of simplicity let us first observe what happens if we suppose that there are no $*$ in our function and we will later explain how to get the general case. As previously we treat separately the case where the switch occurs inside a same trace and the case where it affects two distinct traces.\newline
\textbf{The switch occurs in the same trace:} Let's say that the switch is between $p$ and $q$ inside the $\kappa$\textsuperscript{th} trace. Then, when we develop the traces, we see that the contribution of this trace, after the switch, is of the type:
\begin{eqnarray*}
  &&\mathbb{E}[\sum_{s_{11}\ldots s_{k_rr}}\ldots U_{(i_{p\kappa}-1)d+s_{p\kappa},(j_{q\kappa}-1)d+s_{q\kappa}}\ldots U_{(i_{q\kappa}-1)d+s_{q\kappa},(j_{p\kappa}-1)d+s_{p\kappa}}\ldots   ]
\end{eqnarray*}
As we could have expected the $\kappa$\textsuperscript{th} trace will be divided into two distinct traces: we get $d\gamma_{\ldots;i_{1\kappa}j_{1\kappa},\ldots,i_{p\kappa}j_{q\kappa},i_{q+1,\kappa}j_{q+1,\kappa},\ldots;i_{p+1,\kappa}j_{p+1,\kappa},\ldots,i_{q\kappa}j_{p\kappa};\ldots}$ (we recall that the normalization constant we now use for the trace is $1/d$).\newline
\textbf{The switch concerns two distinct traces: }If we do the calculations, we see that we reunite these two traces and that we get a multiplicative factor $1/d$.\newline
So, if we put it all together (in the case we have no $*$ at all), the equation we will have is:
\begin{eqnarray*}
  &&\gamma^\prime_{\alpha_{11},\ldots,\alpha_{k_11};\ldots;\ldots,\alpha_{k_rr}}\\
  &=&-\frac{k_1+\ldots+k_r}{2}\gamma_{\alpha_{11},\ldots,\alpha_{k_11};\ldots;\ldots,\alpha_{k_rr}}\\
  &-&\sum_{\kappa=1}^r\sum_{1\leq p<q\leq k_\kappa}\frac{1}{n}\gamma_{\ldots;\alpha_{1\kappa},\ldots,(i_{p\kappa}j_{q\kappa}0),\alpha_{q+1,\kappa},\ldots;\alpha_{p+1,\kappa},\ldots,(i_{q\kappa},j_{p\kappa},0);\ldots}\\
  &+&\mathcal{O}(\frac{1}{d^2})
\end{eqnarray*}

Now, in the general case. We can remark that $[U^*]_{ij}=[U]_{ji}^*$. We also have:
\begin{eqnarray*}
  \ddiff U_{\mu\nu}&=&\ima \sum_{\tau=1}^d\ddiff H_{\mu\tau}U_{\tau\nu}-\frac{1}{2}U_{\mu\nu}\ddiff t\\
  \ddiff\bar{U}_{\mu\nu}&=&-\ima\sum_{\tau=1}^d\bar{U}_{\tau\nu}\ddiff H_{\tau\mu}-\frac{1}{2}\bar{U}_{\mu\nu}\ddiff t
\end{eqnarray*} 
In turn this yields to the more general Lemma:
\begin{lm}
We have, for $\rho_1,\ldots,\rho_r\in\mathcal{I}$:
\begin{eqnarray*}
  \ddiff(U_{\rho_1}\ldots U_{\rho_r})&=&-\frac{r}{2}U_{\rho_1}\ldots U_{\rho_r}\ddiff t\\
  &+&\text{ martingale part}-\frac{\ddiff t}{nd}\sum_{1\leq p<q\leq r}(-1)^{\omega_p+\omega_q}\zeta_{pq}^{\langle d\rangle)}
\end{eqnarray*}
where:
\begin{equation}
\zeta_{pq}^{\langle d\rangle)}=
\begin{cases}
  U_{\rho_1}\ldots U_{\mu_p\nu_q}\ldots U_{\mu_q\nu_p}\ldots U_{\rho_r}&\text{ if }\omega_p=\omega_q=0\\
  U_{\rho_1}\ldots \bar{U}_{\mu_p\nu_q}\ldots \bar{U}_{\mu_q\nu_p}\ldots U_{\rho_r}&\text{ if }\omega_p=\omega_q=1\\
  \sum_{\tau=1}^{nd} \delta_{\mu_p\nu_q}U_{\rho_1}\ldots \bar{U}_{\tau\nu_p}\ldots U_{\tau\nu_q}\ldots U_{\rho_r}&\text{ if }\omega_p=1,\omega_q=0\\
  \sum_{\tau=1}^{nd} \delta_{\mu_p\mu_q}U_{\rho_1}\ldots U_{\tau\nu_p}\ldots \bar{U}_{\tau\nu_q}\ldots U_{\rho_r}&\text{ if }\omega_p=0,\omega_q=1 
\end{cases}
\end{equation}
\end{lm}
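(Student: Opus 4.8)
The plan is to follow the proof of Lemma~\ref{lmfondamental} almost verbatim, the one new feature being that some of the factors are complex conjugates $\bar U_{\mu\nu}$ rather than entries $U_{\mu\nu}$. Since the entries of $U$ and their conjugates are ordinary commuting complex-valued random variables, the classical Itô product rule gives
\[
\ddiff(U_{\rho_1}\ldots U_{\rho_r})=\sum_{k=1}^r\Bigl(\prod_{s\neq k}U_{\rho_s}\Bigr)\ddiff U_{\rho_k}+\sum_{1\leq p<q\leq r}\Bigl(\prod_{s\neq p,q}U_{\rho_s}\Bigr)\ddiff[U_{\rho_p},U_{\rho_q}],
\]
with the convention that the bracket $\ddiff[U_{\rho_p},U_{\rho_q}]$ is reinserted at positions $p$ and $q$ (which is unambiguous because everything commutes). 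First I would substitute the scalar equations $\ddiff U_{\mu\nu}=\ima\sum_\tau\ddiff H_{\mu\tau}U_{\tau\nu}-\tfrac12U_{\mu\nu}\ddiff t$ and $\ddiff\bar U_{\mu\nu}=-\ima\sum_\tau\bar U_{\tau\nu}\ddiff H_{\tau\mu}-\tfrac12\bar U_{\mu\nu}\ddiff t$ displayed just above the statement. The drift part of $\ddiff U_{\rho_k}$ equals $-\tfrac12U_{\rho_k}\ddiff t$ whether $\omega_k=0$ or $1$, so the first sum produces exactly $-\tfrac r2U_{\rho_1}\ldots U_{\rho_r}\ddiff t$ together with a martingale.

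Next I would compute the brackets using $\ddiff[H_{ab},H_{cd}]=\tfrac1{nd}\delta_{ad}\delta_{bc}\,\ddiff t$ — the same elementary computation as in Lemma~\ref{lmfondamental} (conjugate entries $H_{ab}$ and $\bar H_{ab}=H_{ba}$ have bracket $\tfrac1{2nd}+\tfrac1{2nd}=\tfrac1{nd}$, while all other pairs give $0$), with $d$ replaced by $nd$. Substituting the scalar equations and contracting the two Kronecker deltas leads to four cases according to the pair $(\omega_p,\omega_q)$. When $\omega_p=\omega_q=0$ the prefactor is $\ima^2=-1$ and both dummy summation indices get pinned, giving $\ddiff[U_{\mu_p\nu_p},U_{\mu_q\nu_q}]=-\tfrac1{nd}U_{\mu_q\nu_p}U_{\mu_p\nu_q}\ddiff t$; symmetrically for $\omega_p=\omega_q=1$ the prefactor is $(-\ima)^2=-1$ and one gets $-\tfrac1{nd}\bar U_{\mu_q\nu_p}\bar U_{\mu_p\nu_q}\ddiff t$. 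In the two mixed cases the prefactor is $(-\ima)(\ima)=+1$; one of the two deltas now constrains a pair of the outer indices (producing the Kronecker factor that appears in the answer) while the other merely identifies the two dummy indices, so a single residual sum over $\tau$ survives — this is exactly the $\zeta_{pq}$ of the last two lines. Collecting the signs ($+1$ in the two pure cases, $-1$ in the two mixed ones) as $(-1)^{\omega_p+\omega_q}$ and reinserting into the product yields the stated drift $-\tfrac{\ddiff t}{nd}\sum_{p<q}(-1)^{\omega_p+\omega_q}\zeta_{pq}$, the leftover martingale pieces being absorbed into the martingale part.

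The whole argument is a routine Itô computation; the only delicate point, absent from Lemma~\ref{lmfondamental}, is the bookkeeping in the two mixed-parity cases, where the contraction does not fully collapse and one must track carefully which among $\mu_p,\nu_p,\mu_q,\nu_q$ ends up constrained by a Kronecker delta and which becomes the dummy summation variable, while also keeping straight the sign coming from the two different forms ($\pm\ima$) of the scalar equation. Once those are fixed, matching the four cases to the four branches of $\zeta_{pq}$ is immediate.
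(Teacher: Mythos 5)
Your proposal is correct and follows essentially the same route as the paper, whose proof consists precisely of invoking It\^o's formula together with the two bracket identities $\ddiff[U_{\mu\nu},U_{\theta\eta}]=-\tfrac{\ddiff t}{nd}U_{\theta\nu}U_{\mu\eta}$ and $\ddiff[\bar U_{\mu\nu},U_{\theta\eta}]=\tfrac{\ddiff t}{nd}\delta_{\mu\theta}\sum_\tau\bar U_{\tau\nu}U_{\tau\eta}$; you simply spell out the delta-contraction and sign bookkeeping that the paper leaves implicit. Your computation also confirms that the Kronecker factor in the third branch should read $\delta_{\mu_p\mu_q}$ (as in the fourth branch and in the later conditional-expectation version of the lemma), the $\delta_{\mu_p\nu_q}$ in the statement being a typo.
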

\begin{proof}
It is an application of Itô's Lemma along with the observation that:
$$\ddiff[U_{\mu\nu},U_{\theta \eta }]=-\frac{\ddiff t}{nd}U_{\theta\nu}U_{\mu\eta}\text{ and }\ddiff[\bar{U}_{\mu\nu},U_{\theta\eta}]=\sum_{\tau=1}^{nd}\frac{\ddiff t}{nd}B_{\tau\nu}B_{\tau\eta}\delta_{\mu\theta}$$
\end{proof}
So, taking up the same calculations as before, we get following system of differential equations:
\begin{eqnarray*}
  \gamma_{\alpha_{11},\ldots}^\prime&=&-\frac{k_1+\ldots+k_r}{2}\gamma_{\alpha_{11},\ldots}\\
  &-&\sum_{\kappa=1}^r\sum_{1\leq p<q\leq k_\kappa}(-1)^{\epsilon_{p\kappa}+\epsilon_{q\kappa}}\gamma_{(p,q,\kappa)}\\
  &+&\mathcal{O}(\frac{1}{d^2})
\end{eqnarray*}
where we note:\newline
\textbf{If $\epsilon_{p\kappa}=\epsilon_{q\kappa}=0$:}
$$\gamma_{(p,q,\kappa)}=\gamma_{\ldots;\alpha_{1\kappa},\ldots, (i_{p\kappa}j_{q\kappa}\epsilon_{q\kappa}),\alpha_{q+1,\kappa},\ldots;\alpha_{p+1,\kappa},\ldots,(i_{q\kappa}j_{p\kappa}\epsilon_{p\kappa});\ldots}$$
That is, we have a switch exactly as before.\newline
\textbf{If $\epsilon_{p\kappa}=\epsilon_{q\kappa}=1$:}
    $$\gamma_{(p,q,\kappa)}=\gamma_{\ldots;\alpha_{1\kappa},\ldots,\alpha_{p-1,\kappa},(i_{q\kappa}j_{p\kappa}\epsilon_{p\kappa}),\ldots;(i_{p\kappa}j_{q\kappa}\epsilon_{q\kappa}),\ldots;\ldots}$$
    That is, we also have here a switch as we have already seen.\newline 
\textbf{If $\epsilon_{p\kappa}=1,\epsilon_{q\kappa}=0$:}
    $$\gamma_{(p,q,\kappa)}=\sum_{t=1}^n\delta_{i_{p\kappa}i_{q\kappa}}\gamma_{\ldots;\alpha_{1\kappa},\ldots,(tj_{p\kappa}\epsilon_{p\kappa}),(tj_{q\kappa}\epsilon_{q\kappa}),\ldots;\alpha_{p+1,\kappa}\ldots\alpha_{q-1,\kappa};\ldots}$$
The structure is here a little more complicated, with a sum over $t$ and $t$ replacing the indices $i_p$ and $i_q$ and everything situated between the places $p$ and $q$ gets located in a new trace.\newline
    \textbf{If $\epsilon_{p\kappa}=0,\epsilon_{q\kappa}=1$:}
    $$\gamma_{(p,q,\kappa)}=\sum_{t=1}^n\delta_{i_{p\kappa}i_{q\kappa}}\gamma_{\ldots;\alpha_{1\kappa},\ldots,\alpha_{p-1,\kappa},\alpha_{q+1,\kappa},\ldots;(tj_{p\kappa}\epsilon_{p\kappa}),\ldots,(tj_{q\kappa}\epsilon_{q\kappa});\ldots}$$
the structure is almost the same as in the previous case, with the only difference that the places $p$ and $q$ and everything in between gets into a new trace.

\subsection{A system of differential equations for the free stochastic process}

We will now introduce:
$$\Gamma_{\alpha_{11},\ldots;\ldots;\alpha_{1r},\ldots,\alpha_{k_rr}}=\phi(\Psi_{\alpha_{11}}\ldots)\ldots\phi(\Psi_{\alpha_{1r}}\ldots\Psi_{\alpha_{k_rr}})$$

To prove the convergence of the marginals, we will show that $\Gamma$ verifies the system of differential equations that we have just found, in the limit where $d$ goes to infinity.\newline 
By using free stochastic calculus we can see that the quadratic variation is $\ddiff X_{ij}dX_{kl}=\delta_{il}\delta_{jk}\ddiff t$. Moreover, the free stochastic differential equation yields, coefficient by coefficient:
$$\ddiff\Psi_{uv}=\frac{\ima}{\sqrt{n}}\sum_{k=1}^n\ddiff X_{uk}\Psi_{kv}-\frac{1}{2}\Psi_{uv}\ddiff t$$
and
$$\ddiff\Psi_{uv}^*=-\frac{\ima}{\sqrt{n}}\sum_{k=1}^n\Psi_{kv}^*\ddiff X_{ku}-\frac{1}{2}\Psi_{uv}^*\ddiff t$$
This allows us to prove following technical Lemma:
\begin{lm}\label{lmfondfree}
For each $r\geq2$ and all indices we have:
\begin{eqnarray*}
  \ddiff(\Psi_{\alpha_1}\ldots\Psi_{\alpha_r})&=&-\frac{r\ddiff t}{2}\Psi_{\alpha_1}\ldots\Psi_{\alpha_r}\\
  &+&\frac{\ima}{\sqrt{n}}\sum_{l=1}^r\sum_{k=1}^n(-1)^{\epsilon_l}\Psi_{\alpha_1}\ldots
  \left\{
  \begin{array}{r l}
  \ddiff X_{i_lk}\Psi_{kj_l}&\text{ if }\epsilon_l=0\\
  \Psi_{kj_l}^*\ddiff X_{ki_l}&\text{ if }\epsilon_l=1
  \end{array}
  \right\}
  \ldots \Psi_{\alpha_r}\\
  &-&\frac{\ddiff t}{n}\sum_{1\leq p<q\leq r}(-1)^{\epsilon_p+\epsilon_q}\zeta_{pq}
\end{eqnarray*}
where
\begin{eqnarray*}
  \zeta_{pq}=
  \begin{cases}
    \Psi_{\alpha_1}\ldots \Psi_{\alpha_{p-1}}\phi(\Psi_{i_qj_p}^{\epsilon_p}\ldots\Psi_{\alpha_{q-1}})\Psi_{i_pj_q}^{\epsilon_q}\ldots&\text{ if }\epsilon_p=\epsilon_q=0\\
    \Psi_{\alpha_1}\ldots \Psi_{\alpha_{p-1}}\Psi_{i_qj_p}^{\epsilon_p}\phi(\Psi_{\alpha_{p+1}}\ldots\Psi_{\alpha_{q-1}}\Psi_{i_pj_q}^{\epsilon_q})\Psi_{\alpha_{q+1}}\ldots&\text{ if }\epsilon_p=\epsilon_q=1\\
    \sum_{k=1}^n\delta_{i_pi_q}\Psi_{\alpha_1}\ldots \Psi_{\alpha_{p-1}}\phi(\Psi_{kj_p}^{\epsilon_p}\ldots\Psi_{\alpha_{q-1}}\Psi_{kj_q}^{\epsilon_q})\ldots&\text{ if }\epsilon_p=0,\epsilon_q=1\\
    \sum_{k=1}^n\delta_{i_pi_q}\Psi_{\alpha_1}\ldots \Psi_{kj_{p}}^{\epsilon_{p}}\phi(\Psi_{\alpha_{p+1}}\ldots\Psi_{\alpha_{q-1}})\Psi_{kj_q}^{\epsilon_q}\ldots&\text{ if }\epsilon_p=1,\epsilon_q=0
  \end{cases}
\end{eqnarray*}
\end{lm}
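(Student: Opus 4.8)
The plan is to prove Lemma \ref{lmfondfree} by the free It\^o calculus, exactly mirroring the structure of the proof of Lemma \ref{lmfondamental}, but now keeping careful track of the $\epsilon$'s and the scalar contractions $\phi(\cdot)$ that arise because the free Brownian motion $X$ has only second-order free cumulants. First I would write the product differentiation rule for free It\^o processes (see \cite{KS}): for $V_1,\ldots,V_r$ adapted free processes,
$$\ddiff(V_1\cdots V_r)=\sum_{l=1}^r V_1\cdots(\ddiff V_l)\cdots V_r+\sum_{1\le p<q\le r}V_1\cdots(\ddiff V_p)\cdots(\ddiff V_q)\cdots V_r,$$
where in the quadratic-variation term the bracket $\ddiff V_p\,\ddiff V_q$ is evaluated using the free It\^o table. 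Here $V_l=\Psi_{\alpha_l}$, so by the coefficientwise equations stated just above the lemma,
$$\ddiff\Psi_{i_lj_l}=\frac{\ima}{\sqrt n}\sum_{k=1}^n\ddiff X_{i_lk}\,\Psi_{kj_l}-\tfrac12\Psi_{i_lj_l}\,\ddiff t,\qquad \ddiff\Psi_{i_lj_l}^*=-\frac{\ima}{\sqrt n}\sum_{k=1}^n\Psi_{kj_l}^*\,\ddiff X_{ki_l}-\tfrac12\Psi_{i_lj_l}^*\,\ddiff t.$$
Substituting these into the first sum immediately produces the $-\frac r2\,\ddiff t\,\Psi_{\alpha_1}\cdots\Psi_{\alpha_r}$ drift term and the martingale term with the $(-1)^{\epsilon_l}$ bookkeeping and the case split on $\epsilon_l$ written in the statement.

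The substance is the second (bracket) sum. For a pair $p<q$ the only surviving contribution comes from the martingale parts of $\ddiff\Psi_{\alpha_p}$ and $\ddiff\Psi_{\alpha_q}$, so I would expand
$$\frac{\ima}{\sqrt n}\sum_{k}(-1)^{\epsilon_p}(\cdots)_k\cdot(\text{factors between }p,q)\cdot\frac{\ima}{\sqrt n}\sum_{l}(-1)^{\epsilon_q}(\cdots)_l$$
and apply the free It\^o rule $\ddiff X_{ij}\,\ddiff X_{kl}=\delta_{il}\delta_{jk}\,\ddiff t$. The crucial point of free (as opposed to classical) stochastic calculus is that a bracket $\ddiff X_{ab}\,W\,\ddiff X_{cd}$, with $W$ a process sitting \emph{between} the two differentials, collapses to $\delta_{ad}\delta_{bc}\,\phi(W)\,\ddiff t$: the middle block gets replaced by its scalar expectation $\phi(W)$. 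This is exactly what produces the $\phi(\Psi_{i_qj_p}^{\epsilon_p}\cdots\Psi_{\alpha_{q-1}})$ type factors in $\zeta_{pq}$. I would then run the four cases $(\epsilon_p,\epsilon_q)\in\{0,1\}^2$ separately: in each case the two Kronecker deltas coming from the It\^o rule pin down which index of $\Psi_{\alpha_p}$ must equal which index of $\Psi_{\alpha_q}$ and force the summation index $k$ to be shared, which is precisely how $j_q$ migrates into the $p$-th slot (cases $\epsilon_p=\epsilon_q$) or how the constraint $\delta_{i_pi_q}$ with a shared row index $k$ appears (mixed cases), and whether the intervening factors land inside the scalar $\phi(\cdot)$ or outside it. Keeping the prefactor $\frac{\ima}{\sqrt n}\cdot\frac{\ima}{\sqrt n}=-\frac1n$ and the signs $(-1)^{\epsilon_p}(-1)^{\epsilon_q}$ gives the claimed $-\frac{\ddiff t}{n}\sum_{p<q}(-1)^{\epsilon_p+\epsilon_q}\zeta_{pq}$.

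The main obstacle is purely bookkeeping: making sure, in the two mixed cases $\epsilon_p\neq\epsilon_q$, that the free It\^o contraction attaches the ``dangling'' summation index on the correct side and that the block of factors strictly between positions $p$ and $q$ is the one pulled out under $\phi$, while the factors outside the pair are left untouched in the word; getting the four formulas for $\zeta_{pq}$ aligned with the conventions $\epsilon=0\leftrightarrow\varnothing$, $\epsilon=1\leftrightarrow *$ requires care but no new idea. A minor point to check is that the drift parts $-\tfrac12\Psi_{\alpha_l}\,\ddiff t$ inside the quadratic-variation sum contribute nothing, since $\ddiff t\cdot\ddiff X=0$ and $(\ddiff t)^2=0$, so only the martingale$\times$martingale brackets matter. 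Once these are dispatched, collecting the three groups of terms yields the stated identity.
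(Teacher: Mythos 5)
Your proposal is correct and follows essentially the same route as the paper: the paper also proves the lemma by free It\^o calculus, organizing the computation as an induction on $r$ built from the two-factor product rule (which is just the rolled-up form of the multilinear product formula you invoke), with the same key contraction $\ddiff X_{ab}\,W\,\ddiff X_{cd}=\delta_{ad}\delta_{bc}\,\phi(W)\,\ddiff t$ producing the $\phi(\cdot)$ factors in $\zeta_{pq}$. If anything, your plan is slightly more complete, since the paper carries out the base case and the inductive step only when all $\epsilon_l=0$, whereas you explicitly treat the four cases $(\epsilon_p,\epsilon_q)\in\{0,1\}^2$.
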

\begin{proof}
The proof is done by recurrence and by using Itô's formula. For simplicity's sake we will do it only in the case where all $\epsilon$ are put equal to zero.\newline
For $r=2$ we get:
$$\ddiff(\Psi_{ij}\Psi_{kl})=\frac{\ima}{\sqrt{n}}\sum_{s=1}^n\Psi_{ij}\ddiff X_{ks}\Psi_{sl}+\frac{\ima}{\sqrt{n}}\sum_{s=1}^n\ddiff X_{is}\Psi_{sj}\Psi_{kl}-\Psi_{ij}\Psi_{kl}\ddiff t-\frac{\ddiff t}{n}\phi(\Psi_{kj})\psi_{il}$$
Hence we have the desired result for $r=2$. Let us now assume that the Lemma is right until a certain $r$. Then, by Itô's Lemma:
\begin{eqnarray*}
  \ddiff(\Psi_{u_1v_1}\ldots\Psi_{u_{r+1}v_{r+1}})&=&-\frac{r+1}{2}\psi_{u_1v_1}\ldots\Psi_{u_{r+1}v_{r+1}}\ddiff t\\
  &+&\frac{\ima}{\sqrt{n}}\sum_{k=1}^n\psi_{u_1v_1}\ldots\Psi_{u_rv_r}\ddiff X_{u_{r+1}k}\Psi_{kv_{r+1}}\\
  &+&\frac{\ima}{\sqrt{n}}\sum_{k=1}^n\sum_{l=1}^r\Psi_{u_1v_1}\ldots \ddiff X_{u_lk}\Psi_{kv_l}\ldots\Psi_{u_{r+1}v_{r+1}}\\
  &-&\frac{\ddiff t}{n}\sum_{1\leq p<q\leq r}\Psi_{u_1v_1}\ldots\phi(\Psi_{u_qv_p}\ldots)\Psi_{u_pv_q}\ldots\Psi_{u_{r+1}v_{r+1}}\\
  &-&\frac{\ddiff t}{n}\sum_{l=1}^r\Psi_{u_1v_1}\ldots \Psi_{u_{l-1}v_{l-1}}\phi(\Psi_{u_{r+1}v_l}\ldots\Psi_{u_rv_r})\Psi_{u_lv_{r+1}}
\end{eqnarray*}
And so we see that the result is also right for $r+1$.
\end{proof}
We now introduce, as expected, the family of functions:
\begin{eqnarray*}
  \Gamma_{\alpha_{11},\ldots;\ldots;\alpha_{1r}}&=&\phi(\Psi_{\alpha_{11}}\ldots)\ldots\phi(\Psi_{\alpha_{1r}}\ldots)
\end{eqnarray*}
By applying Lemma \ref{lmfondfree} we get:
\begin{eqnarray*}
  \Gamma_{\alpha_{11},\ldots;\ldots;\alpha_{1r},\ldots}^\prime&=&-\frac{k_1+\ldots+k_r}{2}\Gamma_{\alpha_{11},\ldots;\ldots;\alpha_{1r},\ldots}\\
  &-&\frac{1}{n}\sum_{\kappa=1}^r\sum_{1\leq p<q\leq k_\kappa}(-1)^{\epsilon_p+\epsilon_q}\Gamma_{(p,q,\kappa)}
\end{eqnarray*}
where we defined:
\begin{eqnarray*}
  &&\Gamma_{(p,q,\kappa)}=\\
  &&\begin{cases}
    \Gamma_{\ldots;\alpha_{1\kappa},\ldots,i_{p\kappa}j_{q\kappa}\epsilon_{q\kappa},\ldots,\alpha_{k_\kappa\kappa};i_{q\kappa}j_{p\kappa}\epsilon_{p\kappa},\ldots,\alpha_{q-1,\kappa};\ldots}&\text{ if }\epsilon_{p\kappa}=\epsilon_{q\kappa}=0\\
    \Gamma_{\ldots;\alpha_{1\kappa},\ldots,i_{q\kappa}j_{p\kappa}\epsilon_{p\kappa},\alpha_{q+1,\kappa},\ldots;\alpha_{p+1,\kappa},\ldots,i_{p\kappa}j_{q\kappa}\epsilon_{q\kappa};\ldots}  &\text{ if }\epsilon_{p\kappa}=\epsilon_{q\kappa}=1\\
    \sum_{l=1}^n\delta_{i_{p\kappa}i_{q\kappa}}\Gamma_{\ldots;\alpha_{1\kappa},\ldots, \alpha_{p-1\kappa},\alpha_{q+1,\kappa}\ldots;lj_{p\kappa}\epsilon_{p\kappa},\ldots,lj_{q\kappa}\epsilon_{q\kappa};\ldots}    &\text{ if }\epsilon_{p\kappa}=0,\epsilon_{q\kappa}=1\\
    \sum_{l=1}^n\delta_{i_{p\kappa}i_{q\kappa}}\Gamma_{\ldots;\alpha_{1\kappa},\ldots,lj_{p\kappa}\epsilon_{p\kappa},lj_{q\kappa}\epsilon_{q\kappa},\ldots;\alpha_{p+1,\kappa}\ldots, \alpha_{q-1,\kappa};\ldots} &\text{ if }\epsilon_{p\kappa}=1,\epsilon_{q\kappa}=0\\
  \end{cases}
\end{eqnarray*}
Hence we see that the family of functions $\gamma$ truly converges towards the family of functions $\Gamma$. In particular, taking $r=1$, we see that the $*$-moments of the family $(U_{ij}^{\langle d\rangle)})_{1\leq i,j\leq n}$ converges towards the $*$-moments of $(\Psi_{ij})_{1\leq i,j\leq n}$. This proves the convergence of the marginals.

\section{Conditional expectation}
In order to prove Theorem \ref{maintheorem} we must prove the convergence of all mixed moments of the kind: $\mathbb{E}\circ tr(U_{i_1j_1}^{\epsilon_1}(t_1)\ldots U_{i_rj_r}^{\epsilon_r}(t_r))$ towards $\phi(\Psi_{i_1j_1}^{\epsilon_1}(t_1)\ldots \Psi_{i_rj_r}^{\epsilon_r}(t_r))$. In the previous section we have already proven that this is indeed the case when $\sharp\left\{t_1,\ldots,t_r\right\}=1$. In order to prove the general case we will use a method consisting of computing the joint moments by taking recursively conditional expectations. 

\subsection{Notations}
In order to use this method, we must generalize somewhat our notations. In the sequel, we fix $s\geq0$ and our time variable $t$ will always verify $t\geq s$. 
We note:
\begin{enumerate}
\item by $[\mathcal{I}]$ the set $\left\{1,\ldots,n\right\}^2\times \left\{0,1\right\}\times \mathcal{M}_d^{(s)}$, where $\mathcal{M}_d^{(s)}$ is the set of $d\times d$ matrices whose entries are $\mathcal{F}_s$-measurable random variables. Of course, we have $\mathcal{F}_s=\sigma(j_u,u\leq s)$.
\item by $\mathcal{I}$ the set $\left\{1,\ldots,nd\right\}^2\times \left\{0,1\right\}\times V^{(s)}$, where $V^{(s)}$ designates the set of $\mathcal{F}_s$-measurable random variables.

\item by $\mathcal{I}^f$ the set $\left\{1,\ldots,n\right\}^2\times \left\{0,1\right\}\times \mathcal{A}_s$, where $\mathcal{A}_s$ is the $*$-algebra generated by all $\Psi_{pq}(u),u\leq s$.
\end{enumerate}
We use these sets as sets of indices in the following way:
\begin{enumerate}
\item If $\alpha=(i,j,\epsilon,m)\in[\mathcal{I}]$, we note $[U]_\alpha=m[U]_{ij}^\epsilon$
\item If $\rho=(\mu,\nu,\omega,\pi )\in\mathcal{I}$, we note $U_\rho=\pi U_{\mu\nu}^\omega$
\item If $\alpha=(i,j,\epsilon,m)\in\mathcal{I}^f$, we note $\Psi_\alpha=m\Psi_{ij}^\epsilon$.

\end{enumerate}

\subsection{A system of differential equations for the Brownian motion on $U(nd)$}

We are interested in the family of functions:
\begin{eqnarray*}
  &&\gamma_{\alpha_{11},\ldots,\alpha_{k_11};\ldots;\ldots,\alpha_{k_rr}}(t)\\
  &=&\mathbb{E}[tr([U]_{\alpha_{11}}(t)\ldots [U]_{\alpha_{k_11}}(t))\ldots tr(\ldots [U]_{\alpha_{k_rr}}(t))]
\end{eqnarray*}
In other words, we use the same family as before but we put $\mathcal{F}_s$-measurable elements between the blocks of the Brownian motion. \newline
We want to use the same method as before. We will need following Lemma:
\begin{lm}\label{lmfondespcond}
We have for any choice of indices in $\mathcal{I}$ and for $t\geq s$:
\begin{eqnarray*}
  \ddiff(U_{\rho_1}\ldots U_{\rho_k})&=&-\frac{k}{2}U_{\rho_1}\ldots U_{\rho_k} \ddiff t\\
  &-&\frac{1}{nd}\sum_{1\leq p<q\leq k}(-1)^{\omega_p+\omega_q}\zeta_{pq}^{\langle d\rangle>}\ddiff t\\
  &+&\text{ martingale part}
\end{eqnarray*}
where:
\begin{equation}
\zeta_{pq}^{\langle d\rangle}=
\begin{cases}
  U_{\rho_1}\ldots \pi_pU_{\mu_p\nu_q}\ldots \pi_qU_{\mu_q\nu_p}\ldots U_{\rho_k}&\text{ if }\omega_p=\omega_q=0\\
  U_{\rho_1}\ldots \pi_pU^*_{\mu_p\nu_q}\ldots \pi_qU^*_{\mu_q\nu_p}\ldots U_{\rho_k}&\text{ if }\omega_p=\omega_q=1\\
  \sum_{\tau=1}^{nd} \delta_{\mu_p\mu_q}U_{\rho_1}\ldots \pi_pU^*_{\tau \nu_p}\ldots \pi_qU_{\tau\nu_q}\ldots U_{\rho_k}&\text{ if }\omega_p=1,\omega_q=0\\
  \sum_{\tau=1}^{nd} \delta_{\mu_p\mu_q}U_{\rho_1}\ldots \pi_pU_{\tau\nu_p}\ldots \pi_qU^*_{\tau\nu_q}\ldots U_{\rho_k}&\text{ if }\omega_p=0,\omega_q=1 
\end{cases}
\end{equation}
\end{lm}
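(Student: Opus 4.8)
The plan is to repeat the proof of the $\pi$-free version of this statement (the analogous Lemma of Section 3), the only new feature being the $\mathcal F_s$-measurable coefficients $\pi_1,\dots,\pi_k$ sitting between consecutive entries. First I would record that, since $t\ge s$ and the Brownian increments $H(t)-H(s)$ are independent of $\mathcal F_s$, each $\pi_i$ behaves as a constant for the stochastic calculus carried out on $[s,t]$: one has $\ddiff\pi_i=0$ and $\pi_i$ has vanishing quadratic covariation with every entry $U_{\mu\nu}^{\omega}(t)$, $t\ge s$. Moreover all objects in sight are matrix entries, hence scalar-valued and mutually commuting, so the $\pi_i$ may be slid freely past any $U$.

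Next I would apply Itô's product formula to $U_{\rho_1}\cdots U_{\rho_k}=\pi_1 U_{\mu_1\nu_1}^{\omega_1}\cdots\pi_k U_{\mu_k\nu_k}^{\omega_k}$. By the previous remark the operator $\ddiff$ only hits the $U$-factors, so the expansion splits into single-factor terms $U_{\rho_1}\cdots\pi_l(\ddiff U_{\mu_l\nu_l}^{\omega_l})\cdots U_{\rho_k}$ and cross terms $(\prod_{m\ne p,q}U_{\rho_m})\,\pi_p\pi_q\,\ddiff[U_{\mu_p\nu_p}^{\omega_p},U_{\mu_q\nu_q}^{\omega_q}]$ for $p<q$. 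Using $\ddiff U_{\mu\nu}=\ima\sum_\tau\ddiff H_{\mu\tau}U_{\tau\nu}-\tfrac12U_{\mu\nu}\ddiff t$ together with $\ddiff\bar U_{\mu\nu}=-\ima\sum_\tau\bar U_{\tau\nu}\ddiff H_{\tau\mu}-\tfrac12\bar U_{\mu\nu}\ddiff t$, the single-factor terms yield the martingale part plus the drift $-\tfrac{k}{2}U_{\rho_1}\cdots U_{\rho_k}\ddiff t$. For the cross terms I would reuse the quadratic-variation identities already established, namely $\ddiff[U_{\mu\nu},U_{\theta\eta}]=-\tfrac{\ddiff t}{nd}U_{\theta\nu}U_{\mu\eta}$ and $\ddiff[\bar U_{\mu\nu},U_{\theta\eta}]=\tfrac{\ddiff t}{nd}\sum_\tau\bar U_{\tau\nu}U_{\tau\eta}\,\delta_{\mu\theta}$ (and their complex conjugates), which are manifestly unaffected by the insertion of $\mathcal F_s$-measurable coefficients. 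Pulling out the sign $(-1)^{\omega_p+\omega_q}$ and, where needed, relabelling the dummy row index as $\tau$, one reads off in each of the four combinations of $\omega_p,\omega_q$ the stated expression for $\zeta_{pq}^{\langle d\rangle}$, the coefficient $\pi_p$ remaining in the slot originally at position $p$ and $\pi_q$ in the slot at position $q$.

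I do not expect a genuine obstacle; the only point requiring attention is the two mixed cases $\omega_p\ne\omega_q$, where the covariation carries an extra summation over a row index and a Kronecker factor $\delta_{\mu_p\mu_q}$, and one must check that this index threads consistently through the two modified slots while the coefficients $\pi_i$ stay put — this reproduces the third and fourth lines of the formula for $\zeta_{pq}^{\langle d\rangle}$. Collecting the drift contributions then gives the asserted identity.
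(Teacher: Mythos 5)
Your proposal is correct and follows essentially the same route as the paper, whose proof of this lemma is simply the remark that it follows from It\^o's Lemma; you supply the details the paper omits, namely that the $\mathcal{F}_s$-measurable coefficients $\pi_i$ are constant for the stochastic calculus on $[s,t]$ (zero differential and zero covariation with the post-$s$ entries) and that the quadratic-variation identities from the Section~3 lemma then produce the four cases of $\zeta_{pq}^{\langle d\rangle}$ with the sign $(-1)^{\omega_p+\omega_q}$ extracted. Nothing further is needed.
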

\begin{proof}
As always, this is proven using Itô's Lemma.
\end{proof}
Applying this Lemma, we get:

\begin{lm}\label{systequadiff}
The system of differential equations is:
\begin{eqnarray*}
  &&\gamma_{\alpha_{11},\ldots,\alpha_{k_11};\ldots;\ldots,\alpha_{k_rr}}^\prime\\
  &=&-\frac{k_1+\ldots+k_r}{2}\gamma_{\alpha_{11},\ldots,\alpha_{k_11};\ldots;\ldots,\alpha_{k_rr}}\\
  &-&\frac{1}{n}\sum_{\kappa=1}^r\sum_{1\leq p<q\leq k_\kappa}(-1)^{\epsilon_{p\kappa}+\epsilon_{q\kappa}}\gamma_{(p,q,\kappa)}\\
  &+&\mathcal{O}(\frac{1}{d^2})
\end{eqnarray*}
where:\newline
\textbf{If $\epsilon_{p\kappa}=\epsilon_{q\kappa}=0$:}
  $$\gamma_{(p,q,\kappa)}= \gamma_{\ldots;\ldots,(m_{p\kappa},i_{p\kappa}j_{q\kappa}\epsilon_{q\kappa}),\alpha_{q+1,\kappa}\ldots;\alpha_{p+1,\kappa},\ldots,(m_{q\kappa},i_{q\kappa}j_{p\kappa}\epsilon_{p\kappa});\ldots}$$
\textbf{If $\epsilon_{p\kappa}=\epsilon_{q\kappa}=1$:}
    $$\gamma_{(p,q,\kappa)}=\gamma_{\ldots;\ldots,(m_{p\kappa},i_{q\kappa}j_{p\kappa}\epsilon_{p\kappa}),\alpha_{q+1,\kappa},\ldots;\alpha_{p+1,\kappa}, \ldots,(1,i_{p\kappa}j_{q\kappa}\epsilon_{q\kappa});\ldots}$$
\textbf{If $\epsilon_{p\kappa}=1,\epsilon_{q\kappa}=0$:}
    $$\gamma_{(p,q,\kappa)}=\sum_{t=1}^n\delta_{i_{p\kappa}i_{q\kappa}}\gamma_{\ldots;\ldots,(m_{p\kappa},t,j_{p\kappa}\epsilon_{p\kappa}),(t,j_{q\kappa},\epsilon_{q\kappa},1),\ldots;(i_{p+1,\kappa},j_{p+1,\kappa},\epsilon_{p+1,\kappa},m_{q\kappa}m_{p+1,\kappa}),\ldots ;\ldots}$$
\textbf{If $\epsilon_{p\kappa}=0,\epsilon_{q\kappa}=1$:}
   $$ \gamma_{(p,q,\kappa)}=\sum_{t=1}^n\delta_{i_{p\kappa}i_{q\kappa}}\gamma_{\ldots;\ldots,(i_{q+1,\kappa},j_{q+1,\kappa},\epsilon_{q+1,\kappa},m_{p\kappa}m_{q+1,\kappa},)\ldots;(t,j_{p\kappa},\epsilon_{p\kappa},1),\ldots,(t,j_{q\kappa},\epsilon_{q\kappa},m_{q\kappa});\ldots}$$

\end{lm}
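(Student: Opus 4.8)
The proof follows the same pattern as the computation of Section 3, the only genuinely new ingredient being the bookkeeping of the $\mathcal{F}_s$-measurable weights $m_{kl}$ (equivalently, at the level of scalar entries, the weights $\pi$ appearing in Lemma \ref{lmfondespcond}). I would begin by remarking that each such weight, regarded as a process in $t$, is constant on $[s,\infty)$ since it is $\mathcal{F}_s$-measurable; hence it has zero quadratic variation and behaves as a constant in every Itô computation for $t\ge s$. Granting Lemma \ref{lmfondespcond}, whose drift is exactly the Itô drift of the weighted product $U_{\rho_1}\ldots U_{\rho_k}$ obtained from the entrywise differentials of $U$ and $\bar U$ together with the quadratic variations $\ddiff[U_{\mu\nu},U_{\theta\eta}]=-\frac{\ddiff t}{nd}U_{\theta\nu}U_{\mu\eta}$ and $\ddiff[\bar U_{\mu\nu},U_{\theta\eta}]$, it remains to insert that drift into the definition of $\gamma$.

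Concretely, I would expand each normalized trace $tr([U]_{\alpha_{1\kappa}}(t)\ldots[U]_{\alpha_{k_\kappa\kappa}}(t))$ into a sum over $d$-dimensional internal indices of a product of weighted scalar entries, recording for every factor its block label $(i,j)$, its conjugation $\epsilon$ and the weight matrix $m$ sitting on its left. Differentiating and taking expectations kills the martingale part; the term $-\frac{k}{2}U_{\rho_1}\ldots U_{\rho_k}\,\ddiff t$ of Lemma \ref{lmfondespcond} immediately produces $-\frac{k_1+\ldots+k_r}{2}\gamma$. The work left is to rewrite the quadratic-variation sum $-\frac{1}{nd}\sum_{1\le p<q\le k}(-1)^{\omega_p+\omega_q}\zeta_{pq}$ in terms of the functions $\gamma$. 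As in Section 3 one separates the positions $p<q$ according to whether they lie in the same trace or in two distinct traces; the cross-trace contributions merge two traces and, after accounting for the $1/d$ normalization, are of order $\mathcal{O}(1/d^2)$, which is exactly why only the $\mathcal{O}(1/d^2)$ remainder records them.

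For $p<q$ inside the $\kappa$-th trace there are four cases, matching the four branches of $\zeta_{pq}$. When $\epsilon_{p\kappa}=\epsilon_{q\kappa}$ the block $\zeta_{pq}$ merely swaps the column labels $j_{p\kappa}\leftrightarrow j_{q\kappa}$ and either keeps the trace intact or cuts it in two according to the sign of the exponents; summing over the internal indices then reproduces a single $\gamma$ with the columns of the $\kappa$-th string interchanged, the weights reattached to the factors they were multiplying, and — when a factor that carried the weight $m_{q\kappa}$ is promoted to the first position of a newly created trace — a fresh identity weight $1$ inserted there, exactly as written in the statement. When $\epsilon_{p\kappa}\ne\epsilon_{q\kappa}$ the relevant branch of $\zeta_{pq}$ carries the sum $\sum_{\tau=1}^{nd}\delta_{\mu_p\mu_q}$; I would write $\tau=(t-1)d+\sigma$ with $t\in\{1,\ldots,n\}$ and $\sigma\in\{1,\ldots,d\}$, so that $\delta_{\mu_p\mu_q}$ factorises as $\delta_{i_{p\kappa}i_{q\kappa}}$ times a Kronecker symbol on the internal indices; the latter closes the factors lying strictly between positions $p$ and $q$ into their own trace while the outer factors form the complementary one, the surviving sum over $t$ becomes the $\sum_{t=1}^{n}$ of the statement, and the weights redistribute into the two traces, a product such as $m_{q\kappa}m_{p+1,\kappa}$ (resp. $m_{p\kappa}m_{q+1,\kappa}$) appearing precisely because two formerly non-adjacent weights become adjacent once the string is closed up.

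The step I expect to be the real obstacle is this last one: verifying, case by case, that the factorisation $\{1,\ldots,nd\}\cong\{1,\ldots,n\}\times\{1,\ldots,d\}$ of the summation index $\tau$ cleanly separates the ``block'' Kronecker symbol from the ``internal'' one, and that the weights $m$ end up attached to exactly the factors shown in the four displayed formulas — in particular the appearance of the identity weight $1$ and of the products $m_{q\kappa}m_{p+1,\kappa}$, $m_{p\kappa}m_{q+1,\kappa}$. Everything else is a transcription of the Section 3 argument, and once the four same-trace cases are checked, adding back the $\mathcal{O}(1/d^2)$ cross-trace terms yields precisely the system displayed in the statement.
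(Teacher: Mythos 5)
Your proposal is correct and follows exactly the route the paper takes: the paper derives Lemma \ref{systequadiff} simply by ``applying'' Lemma \ref{lmfondespcond} and repeating the trace bookkeeping of Section 3, which is precisely what you do, only with more care about the splitting $\tau=(t-1)d+\sigma$ and the reattachment of the weights $m$. Your identification of the cross-trace terms as the $\mathcal{O}(1/d^2)$ remainder and of the factor $\frac{1}{nd}\cdot d=\frac{1}{n}$ for same-trace switches matches the paper's (largely implicit) argument.
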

The structure is very similar to what we had proved in the previous section. We just have to be careful to what happens with the $m$'s. 
\newline
When we proved Biane's result we saw that the system of differential equations had a combinatorial structure related to the idea of integer partitions. I do not see any obvious combinatorial structure in this generalized formula but it is a question that is worth being asked.

\subsection{A system of differential equations for the free stochastic process}
 Of course, we will be interested in the behavior of the family of functions:
\begin{eqnarray*}
  \Gamma_{\alpha_{11},\ldots,\alpha_{k_11};\ldots}&=&\phi(\Psi_{\alpha_{11}}(t)\ldots)\ldots\phi(\ldots)
\end{eqnarray*}

\begin{lm}
For any choice of indices in $\mathcal{I}^f$ and for $t\geq s$, we have:
\begin{eqnarray*}
  \ddiff(\Psi_{\alpha_1}\ldots\Psi_{\alpha_k})&=&-\frac{k}{2}\Psi_{\alpha_1}\ldots\Psi_{\alpha_k} \ddiff t\\
  &+&\frac{\ima}{\sqrt{n}}\sum_{r=1}^n\sum_{l=1}^k\Psi_{\alpha_1}\ldots\alpha_l
  \left\{\begin{array}{r l}
  \ddiff X_{i_lr}\Psi_{rj_l}&\text{ if }\epsilon_l=0\\
  \Psi_{rj_l}\ddiff X_{ri_l}&\text{ if }\epsilon_l=1
  \end{array}\right\}
  \ldots\Psi_{\alpha_k}\\
  &-&\frac{\ddiff t}{n}\sum_{1\leq p<q\leq k}(-1)^{\epsilon_p+\epsilon_q}\zeta_{pq}
\end{eqnarray*}
where
\begin{eqnarray*}
  \zeta_{pq}=
  \begin{cases}
    \Psi_{\alpha_1}\ldots\phi(\Psi_{i_qj_p}^{\epsilon_p}\ldots\Psi_{\alpha_{q-1}}m_q)\Psi_{i_pj_q}^{\epsilon_q}\ldots&\text{ if }\epsilon_p=\epsilon_q=1\\
    \Psi_{\alpha_1}\ldots \alpha_p\Psi_{i_qj_p}^{\epsilon_p}\phi(\Psi_{\alpha_{p+1}}\ldots\Psi_{i_pj_q}^{\epsilon_q})\Psi_{\alpha_{q+1}}\ldots&\text{ if }\epsilon_p=\epsilon_q=1\\
    \sum_{t=1}^k\delta_{i_pi_q}\Psi_{\alpha_1}\ldots \alpha_p\phi(\Psi_{tj_p}^{\epsilon_p}\ldots \Psi_{tj_q}^{\epsilon_q})\Psi_{\alpha_{q+1}}\ldots&\text{ if }\epsilon_p=0,\epsilon_q=1\\
    \sum_{t=1}^k\delta_{i_pi_q}\Psi_{\alpha_1}\ldots\Psi_{tj_p}^{\epsilon_p}\phi(\Psi_{\alpha_{p+1}}\ldots\alpha_q)\Psi_{tj_q}^{\epsilon_q}\ldots&\text{ if }\epsilon_p=1,\epsilon_q=0
  \end{cases}
\end{eqnarray*}
\end{lm}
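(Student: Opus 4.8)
The plan is to prove this by induction on the number $k$ of $\Psi$-factors, in direct imitation of the argument that established Lemma~\ref{lmfondfree} in the case of trivial coefficients, the only new feature being that one must now carry along the $\mathcal{A}_s$-valued coefficients $m_1,\dots,m_k$ sitting between the $\Psi^\epsilon$'s (recall $\Psi_{\alpha_l}=m_l\Psi_{i_lj_l}^{\epsilon_l}$). What makes this harmless is that, for $t\geq s$, each $m_l$ behaves as a constant for the purposes of the free stochastic calculus: it is $\mathcal{F}_s$-measurable whereas the increments of $X$ after time $s$ are free from $\mathcal{A}_s$, so that $\ddiff m_l=0$ and $\ddiff[m_l,X_{ij}]=0$ for all $i,j$; hence in the free Itô expansion of $\ddiff(\Psi_{\alpha_1}\cdots\Psi_{\alpha_k})$ only the $\Psi^\epsilon$'s get differentiated and only their pairwise quadratic variations contribute a $\ddiff t$-term.

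First I would dispose of the base case $k=1$: substituting the coefficientwise equations $\ddiff\Psi_{uv}=\frac{\ima}{\sqrt n}\sum_r\ddiff X_{ur}\Psi_{rv}-\frac{1}{2}\Psi_{uv}\ddiff t$ and $\ddiff\Psi_{uv}^{*}=-\frac{\ima}{\sqrt n}\sum_r\Psi_{rv}^{*}\ddiff X_{ru}-\frac{1}{2}\Psi_{uv}^{*}\ddiff t$ into $\ddiff(m_1\Psi_{\alpha_1})=m_1\,\ddiff(\Psi_{\alpha_1})$ yields at once the stated drift and martingale parts, the $\zeta$-sum being empty. For the inductive step I would set $P_k=\Psi_{\alpha_1}\cdots\Psi_{\alpha_k}$ and apply the free Itô product rule (see \cite{KS}, Theorem~5) to $\ddiff(P_k\Psi_{\alpha_{k+1}})=(\ddiff P_k)\Psi_{\alpha_{k+1}}+P_k(\ddiff\Psi_{\alpha_{k+1}})+\ddiff[P_k,\Psi_{\alpha_{k+1}}]$, feeding in the induction hypothesis for $\ddiff P_k$ and the base case for $\ddiff\Psi_{\alpha_{k+1}}$. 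The drift and martingale pieces reproduce, after re-indexing, all the terms of the statement for $P_{k+1}$ with $p,q\leq k$; the genuinely new term is the bracket, which contracts the unique $\ddiff X$ carried by each summand $l\leq k$ of the martingale part of $\ddiff P_k$ against the unique $\ddiff X$ of $\ddiff\Psi_{\alpha_{k+1}}$ through $\ddiff X_{ab}\,\ddiff X_{cd}=\delta_{ad}\delta_{bc}\ddiff t$, the free Itô rule replacing the portion of the word lying strictly between the two contracted differentials by its scalar $\phi$-value. This generates exactly the pairs $(p,q)=(l,k+1)$ of the $\zeta_{pq}$-sum: when $\epsilon_l=\epsilon_{k+1}$ the contraction merely excises a sub-word, whereas when $\epsilon_l\neq\epsilon_{k+1}$ a $\ddiff X_{i_l\cdot}$ can only be paired with a $\ddiff X_{\cdot i_l}$ — which occurs only next to a $\Psi^{*}$ — so $i_l=i_{k+1}$ is forced, producing the Kronecker factor $\delta_{i_pi_q}$ together with the summation over the contracted inner index.

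The step I expect to be the main obstacle is not the calculus but the bookkeeping of the coefficients $m_l$: one has to check that, when the word is split by a freshly created $\phi(\cdot)$, the $m$'s to the left of position $p$ or to the right of position $q$ remain outside $\phi$, those strictly between $p$ and $q$ pass inside it, and the coefficient borne by whichever endpoint factor migrates is placed exactly as displayed — which, in the two mixed cases $\epsilon_p\neq\epsilon_q$, may call for the traciality of $\phi$ (an assumption of Theorem~\ref{maintheorem}) to reposition it cyclically. This is a finite, purely combinatorial verification, entirely parallel to the computation of the terms $\gamma_{(p,q,\kappa)}$ already carried out for the matricial model in Lemma~\ref{systequadiff}; once it is settled, the rest of the proof is a mechanical transcription of the trivial-coefficient case of Lemma~\ref{lmfondfree}.
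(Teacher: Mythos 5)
Your proposal is correct and follows essentially the same route as the paper: the paper's proof of this lemma simply states that it is the same recurrence-plus-free-It\^o argument as for Lemma~\ref{lmfondfree}, which is exactly the induction on $k$ you carry out, with the additional (correct) observation that the $\mathcal{F}_s$-measurable coefficients $m_l$ are constants for the stochastic calculus after time $s$. Your explicit bookkeeping of where the $m_l$'s land relative to the newly created $\phi(\cdot)$ is more detailed than anything the paper writes down, but it is the same proof.
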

\begin{proof}
It is the same proof as before, based on Itô's formula.\end{proof}

Applying this Lemma, we get:

\begin{lm}\label{freesystem}
The system of differential equations for the free stochastic process is:
\begin{eqnarray*}
&&\Gamma_{\alpha_{11},\ldots;\ldots}^\prime\\
&=&-\frac{k_1+\ldots+k_r}{2}\Gamma_{\alpha_{11},\ldots;\ldots}\\
&-&\sum_{\kappa=1}^r\sum_{1\leq p<q\leq k_\kappa}(-1)^{\epsilon_{p\kappa}+\epsilon_{q\kappa}}\Gamma_{(p,q,\kappa)}
\end{eqnarray*}
where:\newline
\textbf{If $\epsilon_{p\kappa}=\epsilon_{q\kappa}=0$:}
  $$\Gamma_{(p,q,\kappa)}=
    \Gamma_{\ldots;\ldots,(i_{p\kappa}j_{q\kappa}\epsilon_{q\kappa}m_{p\kappa}),\ldots;(i_{q\kappa}j_{p\kappa}\epsilon_{p\kappa}m_{q\kappa}),\ldots,\alpha_{q-1,\kappa};\ldots}$$
\textbf{If $(\epsilon_{p\kappa},\epsilon_{q\kappa})=(1,1)$:}
    $$\Gamma_{(p,q,\kappa)}=\Gamma_{\ldots;\ldots,(i_{q\kappa}j_{p\kappa}\epsilon_{p\kappa}m_{p\kappa}),\alpha_{q+1,\kappa},\ldots;\alpha_{p+1,\kappa},\ldots,(i_{p\kappa}j_{q\kappa}\epsilon_{q\kappa}1);\ldots}$$
\textbf{If $\epsilon_{p\kappa}=0,\epsilon_{q\kappa}=1$:}
    $$\Gamma_{(p,q,\kappa)}=\sum_{t=1}^n\delta_{i_{p\kappa}i_{q\kappa}}\Gamma_{\ldots;\ldots,(i_{q+1,\kappa},j_{q+1,\kappa},\epsilon_{q+1,\kappa},m_{p\kappa}m_{q+1,\kappa}),\ldots;(tj_{p\kappa}\epsilon_{p\kappa}1),\ldots,(tj_{q\kappa}\epsilon_{q\kappa}m_{q\kappa});\ldots}$$
\textbf{If $\epsilon_{p\kappa}=1,\epsilon_{q\kappa}=0$:}
    $$\Gamma_{(p,q,\kappa)}=\sum_{t=1}^n\delta_{i_{p\kappa}i_{q\kappa}}\Gamma_{\ldots;\ldots,(tj_{p\kappa}\epsilon_{p\kappa}m_{p\kappa}),(tj_{q\kappa}\epsilon_{q\kappa}1),\ldots;(i_{p+1,\kappa},j_{p+1,\kappa},\epsilon_{p+1,\kappa},m_{q\kappa}m_{p+1,\kappa}),\ldots,\ldots}$$

\end{lm}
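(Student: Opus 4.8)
The plan is to derive the system exactly as in Section 3, now feeding in the conditional Itô-type expansion of $\ddiff(\Psi_{\alpha_1}\ldots\Psi_{\alpha_k})$ proved in the lemma just above. Write the function under consideration as a product of $r$ trace factors,
\[
\Gamma_{\alpha_{11},\ldots;\ldots}=\prod_{\kappa=1}^{r}\phi\bigl(W_\kappa(t)\bigr),\qquad W_\kappa(t)=\Psi_{\alpha_{1\kappa}}(t)\cdots\Psi_{\alpha_{k_\kappa\kappa}}(t),
\]
and observe that each $\phi\bigl(W_\kappa(t)\bigr)$ is an honest (scalar, deterministic) function of $t$. Hence $\Gamma$ is a product of differentiable complex-valued functions, and its derivative is given by the ordinary Leibniz rule $\bigl(\prod_\kappa\phi(W_\kappa)\bigr)'=\sum_\kappa \phi(W_\kappa)'\prod_{\lambda\neq\kappa}\phi(W_\lambda)$. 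In particular, and in contrast with the matrix computation of Section 3, there is no cross-trace quadratic-covariation term and hence no $\mathcal O(1/d^2)$ remainder.

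First I would compute $\phi(W_\kappa)'$. Applying the preceding lemma to $\ddiff W_\kappa$ and then $\phi$, the free stochastic integral (the $\ddiff X$ part) is centred and therefore annihilated by $\phi$, leaving only the drift $-\tfrac{k_\kappa}{2}\phi(W_\kappa)$ and the quadratic-variation terms $-\tfrac1n\sum_{1\le p<q\le k_\kappa}(-1)^{\epsilon_{p\kappa}+\epsilon_{q\kappa}}\phi(\zeta_{pq}^{(\kappa)})$. In each of the four cases for $(\epsilon_{p\kappa},\epsilon_{q\kappa})$ the element $\zeta_{pq}^{(\kappa)}$ already carries one interior application of $\phi$ splitting $W_\kappa$ into two sub-words; applying the exterior $\phi$ factorises the remaining sub-word as well. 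This is where the standing hypothesis that $\phi$ is tracial (from Theorem \ref{maintheorem}) is used, namely to rewrite each sub-word, via $\phi(ab)=\phi(ba)$, as a trace based at the appropriate slot. Thus $\phi(\zeta_{pq}^{(\kappa)})$ is a product of two traces, and multiplying by the untouched factors $\prod_{\lambda\neq\kappa}\phi(W_\lambda)$ yields a product of $r+1$ traces, i.e. again a function of type $\Gamma$.

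The remaining, and only genuinely fiddly, point is to verify that the $r+1$ traces so produced carry exactly the index data recorded in the statement, in particular that the $\mathcal A_s$-valued coefficients $m_{l\kappa}$ land in the correct slots. Here one tracks that the switch at positions $p$ and $q$ keeps $m_{p\kappa}$ attached to the slot carrying the new pair $(i_{p\kappa},j_{q\kappa})$ and $m_{q\kappa}$ attached to $(i_{q\kappa},j_{p\kappa})$, that the slot opening a newly created trace acquires the trivial coefficient $1$, and that when a sub-word is cyclically re-based the coefficient of its old first letter is absorbed into the coefficient of the letter now preceding it — this accounts for the composite coefficients such as $m_{q\kappa}m_{p+1,\kappa}$ and $m_{p\kappa}m_{q+1,\kappa}$ appearing in the mixed-$\epsilon$ cases. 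Carrying out this bookkeeping in the four cases and then summing over $\kappa$ and over pairs $p<q$ produces the asserted system. I expect this index/coefficient matching, rather than any analytic step, to be the main obstacle, exactly as the analogous matching was the crux of Lemma \ref{systequadiff}.
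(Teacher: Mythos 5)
Your proposal is correct and follows the same route as the paper, which simply derives the system by applying the preceding It\^o-type lemma for $\ddiff(\Psi_{\alpha_1}\ldots\Psi_{\alpha_k})$, killing the free stochastic integral under $\phi$, and using the ordinary Leibniz rule over the product of (deterministic, scalar) trace factors. Your additional remarks on the absence of cross-trace terms, the role of traciality in re-basing the sub-words, and the bookkeeping of the $\mathcal{A}_s$-valued coefficients $m_{l\kappa}$ are exactly the points the paper leaves implicit.
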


\subsection{Recurrence}
We are now able to finish the proof of Theorem \ref{maintheorem}. We want to show that the moments $\mathbb{E}\circ tr([U]_{i_1j_1}(t_1)^{\epsilon_1}\ldots [U]_{i_kj_k}^{\epsilon_k}(t_k))$ converge towards $\phi(\Psi_{i_1j_1}^{\epsilon_1}(t_1)\ldots\Psi_{i_kj_k}^{\epsilon_k}(t_k))$. Let us denote $\sigma=\sharp\left\{t_1,\ldots,t_k\right\}$ the number of different times showing up in our moment. We are going to prove that result through recurrence on $\sigma$.
\begin{enumerate}
  \item If $\sigma=1$ the result has already been shown because it is just the convergence of the marginals.
  \item Let us suppose that the result is true until a certain $\sigma$. We will now consider a moment using $\sigma+1$ different times. We can order those times in increasing order: $t_1\leq t_1\leq\ldots\leq t_{\sigma+1}$.The recurrence hypothesis tells us that: 
  $$(U_{p,q}(t_i))_{\substack{1\leq i\leq\sigma\\1\leq p,q\leq n}}\underset{{\text{in }*\text{-moments}}}{\longrightarrow}(\Psi_{p,q}(t_i))_{\substack{1\leq i\leq\sigma\\1\leq p,q\leq n}}$$
  We can write the moment under consideration as:
  $$\gamma_{(i_1j_1\epsilon_1m_1^{(d)}),,\ldots,(i_kj_k\epsilon_km_k^{(d)})}(t_{\sigma+1})$$
  where the $m_i^{(d)}$ are $\mathcal{F}_{t_\sigma}$-measurable. Now, let us remark that the family of functions $(\gamma_{\alpha_{11},\ldots,\alpha_{k_11};\alpha_{12},\ldots})$ is entirely characterized by the system of differential equations from Lemma \ref{systequadiff} along with all the relationships between the $\{m_{ij}^{(d)},1\leq j\leq r,1\leq i\leq k_j\}$. In the same way, the family $\Gamma_{\ldots}$ is entirely defined by the system from Lemma \ref{freesystem} along with the relationships between the $\{m_{ij},1\leq j\leq r,1\leq i\leq k_j\}$\newline
  Now, the recurrence hypothesis allows us to say that the $m_i^{(d)},1\leq i\leq k$ converges towards some $m_i,1\leq i\leq k$. This tells us that the relationships between the $\{m_i^{(d)}\}$ "converges" towards the relationships between the $\{m_i\}$. Moreover, the system of differential equations from Lemma \ref{systequadiff} converges towards that of Lemma \ref{freesystem}. To put it in a nutshell, this means:
  $$\gamma_{\alpha_1^{(d)},\ldots,\alpha_1^{(d)}}(t_{\sigma+1})\underset{d\rightarrow\infty}{\longrightarrow}\Gamma_{\alpha_1,\ldots,\alpha_k}(t_{\sigma+1})$$
  Or, in other words, we have the convergence of our moment. 
\end{enumerate}
Thus, we have proven that all $*$-moments converge and this means that Theorem \ref{maintheorem} is proven. 

\section{Some examples of calculations and gaussianity}
We will now use the differential equations that we obtained to calculate some simple moments of our process. We will then be able to draw a consequence about the gaussianity of the free process. In the sequel, we denote by $\phi_t$ the function defined on $U\langle n\rangle$ by $\phi_t=\phi\circ J_{0t}$ where $J_t$ is the limit (free) process.
\subsection{The first moments}
Let us take now $1\leq i\neq j\leq n$. We have the following differential equations:
\begin{eqnarray*}
  \frac{d}{dt}\phi_t(u_{ii})&=&-\frac{1}{2}\phi_t(u_{ii})\\
  \frac{d}{dt}\phi_t(u_{ij})&=&-\frac{1}{2}\phi_t(u_{ij})
\end{eqnarray*}
with initial conditions: $\phi_0(u_{ii})=1$ and $\phi_0(u_{ij})=0$. It thus yields:
\begin{eqnarray*}
  \phi_t(u_{ii})&=&e^{-\frac{1}{2}t}\\
  \phi_t(u_{ij})&=&0
\end{eqnarray*}
We find the same expression for $\phi_t(u_{ii}^*)$ and $\phi_t(u_{ij}^*)$ because they obey the same differential equation with the same initial conditions.
\subsection{The second moments}
Let us take $1\leq i,j,k,l\leq n$. We have following equation:
\begin{eqnarray*}
  \frac{d}{dt}\phi_t(u_{ij}u_{kl})&=&-\phi_t(u_{ij}u_{kl})-\phi_t(u_{il})\phi_t(u_{kj})\frac{1}{n}\\
  &=&-\phi_t(u_{ij}u_{kl})-\frac{1}{n}\delta_{il}\delta_{kj}e^{-t}
\end{eqnarray*}
with initial conditions $\phi_0(u_{ij}u_{kl})=\delta_{ij}\delta_{kl}$ because $\Psi_0=I$. This equation is a linear differential equation of order $1$ and the well-known method allows us to say:
\begin{eqnarray*}
  \phi_t(u_{ij}u_{kl})&=&\frac{\delta_{ij}\delta_{kl}}{n}e^{-t}-t\delta_{il}\delta_{kj}e^{-t}
\end{eqnarray*}
The moments $\phi_t(u_{ij}^*u_{kl}^*)$ also obey the same equation with the same initial condition and they therefore have the same expression. If we are interested in $\phi_t(u_{ij}u_{kl}^*)$ we get the equation:
\begin{eqnarray*}
  \frac{d}{dt}\phi_t(u_{ij}u_{kl}^*)&=&-\phi_t(u_{ij}u_{kl}^*)+\frac{1}{n}\sum_{p=1}^n\phi_t(u_{pj}u_{pl}^*)
\end{eqnarray*}
with initial conditions $\phi_0(u_{ij}u_{kl}^*)=\delta_{ij}\delta_{kl}$. This can be put in the form of a system of linear differential equations by puting $\Phi_t=\left(\phi_t(u_{ij}u_{kl})\right)_{1\leq i,j,k,l\leq n}$ seen as a vector of $\mathbb{C}^{n^4}$ and $A=(a_{(r_1,r_2,r_3,r_4),(s_1,s_2,s_3,s_4)})$ as a matrix acting on $\mathbb{C}^{n^4}$, with:
\begin{eqnarray*}\begin{cases}
a_{rs}=0&\text{ if }s_1=s_3\text{ and }r=s\\
a_{rs}=1/n&\text{ if }s_1=s_3\text{ and }r\neq s\\
a_{rs}=-1&\text{ if }r=s\text{ and }r_1\neq r_3
\end{cases}\end{eqnarray*}
  The equation then is:
\begin{eqnarray*}
  \Phi^\prime&=&A\Phi
\end{eqnarray*}
The solution of such an equation is of the form $\Phi_t=Ce^{At}$ with $C$ a constant.
\subsection{Gaussianity}
We would like to define a Brownian motion on $U\langle n\rangle$ as a free stochastic process having the same law (the same $*$-moments) as $\Psi_t$. This would seem natural because it is just the limit of the Brownian motion on $U\left(nd\right)$. To know if this definition makes sense, we would like $\Psi_t$ to verify some properties, and especially the gaussian property as defined in \cite{Fra}, Proposition $1.12$ and in \cite{Sch}, Proposition $5.1.1$. \newline
We define a counit $\delta$ on $U\langle n\rangle$ as the morphism of $*$-algebras verifying $\delta(u_{ij})=\delta_{ij}$. We recall following definition and results from \cite{Fra} and from \cite{Sch}:
\begin{defi}[Definition $1.8$ from \cite{Fra}]\label{gaussian}
Let $\mathcal{B}$ be a unital $*$-algebra equipped with a character $\delta:\mathcal{B}\rightarrow\mathbb{C}$. A Schürmann triple on $(\mathcal{B},\epsilon)$ is a triple $(\pi,\eta,L)$ consisting of:
\begin{itemize}
  \item A unital $*$-representation $\pi:\mathcal{B}\rightarrow L(D)$ on some pre-Hilbert space $D$.
  \item A linear map $\eta:\mathcal{B}\rightarrow D$ verifying:
  $$\eta(ab)=\pi(a)\eta(b)+\eta(a)\epsilon(b)$$
  \item A hermitian linear functional $L:\mathcal{B}\rightarrow\mathbb{C}$ such that:
  $$-\langle\eta(a^*),\eta(b)\rangle=\epsilon(a)L(b)-L(ab)+L(a)\epsilon(b)$$
\end{itemize}
\end{defi}
\begin{prop}[Theorem $1.9$ from \cite{Fra}]
There is a one-to-one correspondence between Schürmann triples, generators of Lévy processes and Lévy processes.
\end{prop}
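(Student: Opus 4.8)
The plan is to exhibit the three arrows of the correspondence over a fixed dual semigroup $(\mathcal{B},\Delta,\delta)$ and then check that going once around the cycle returns the identity, up to the natural equivalences — unitary equivalence of Schürmann triples and equality in distribution of Lévy processes. First I would pass from a Lévy process $(j_{st})$ to its generator: using stationarity and the increment property together with coassociativity and the counit axiom, one sees that the functionals $\phi_t := \phi\circ j_{0t}$ form a convolution semigroup of states on $\mathcal{B}$, i.e. $\phi_{s+t} = (\phi_s\sqcup\phi_t)\circ\Delta$ and $\phi_0 = \delta$, with $t\mapsto\phi_t(b)$ continuous at $0$ by weak continuity. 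Setting $L := \left.\tfrac{d}{dt}\right|_{t=0}\phi_t$ — differentiability of the convolution semigroup being part of the theory, and in the cases of Sections 3--4 following from the finite linear systems there — one reads off from $\phi_t$ being a state with $\phi_0=\delta$ that $L$ is hermitian, vanishes on $1$, and is conditionally positive ($L(b^*b)\geq 0$ whenever $\delta(b)=0$).

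Second, I would pass from a generator $L$ to a Schürmann triple by a GNS-type construction. On $\mathcal{B}$ put the sesquilinear form
$$\langle a,b\rangle := L(a^*b) - \overline{\delta(a)}\,L(b) - L(a^*)\,\delta(b),$$
which is positive semidefinite because, for $c = a-\delta(a)1$, one has $\delta(c)=0$ and $L(c^*c) = \langle a,a\rangle$, so conditional positivity applies. Quotienting by the null space and completing gives a pre-Hilbert space $D$ with canonical map $\eta\colon\mathcal{B}\to D$; the sought cocycle identity $\eta(ab)=\pi(a)\eta(b)+\eta(a)\delta(b)$ forces $\pi(a)\eta(b):=\eta(ab)-\eta(a)\delta(b)$, and I would check that this is well defined on the quotient, multiplicative, and a unital $*$-representation — the $*$-property being exactly the third Schürmann-triple identity linking $L$, $\eta$ and $\langle\cdot,\cdot\rangle$. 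This yields $(\pi,\eta,L)$, unique up to unitary equivalence since $D$ is generated by $\eta(\mathcal{B})$ with inner products determined by $L$.

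Third, I would pass from a Schürmann triple $(\pi,\eta,L)$ to a Lévy process by realizing it on the Fock space — bosonic in the tensor-independent case, full in the free case — over $L^2(\mathbb{R}_+)\otimes D$, and defining $j_{st}$ as the solution of the structure quantum stochastic differential equation built from the triple, schematically
$$\ddiff j_{st}(b) = j_{st}\!\star\!\Big(\eta(b)\,\ddiff A^{\dagger} + \eta(b^{*})^{*}\,\ddiff A + \big(\pi(b)-\delta(b)\big)\,\ddiff\Lambda + L(b)\,\ddiff t\Big),\qquad j_{ss}(b)=\delta(b)\,1,$$
where $\ddiff A^{\dagger},\ddiff A,\ddiff\Lambda$ are the creation, annihilation and conservation differentials and $\star$ the appropriate (tensor, resp. free) pairing of the already built part with the increment. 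One must show this equation has a unique solution, that each $j_{st}$ is a unital $*$-homomorphism — multiplicativity following from the cocycle relation and the $*$-representation property through the Itô table — and that the increments are independent, with stationary laws; weak continuity is then immediate from the $\ddiff t$ term. Finally, differentiating the vacuum state of the reconstructed process returns $L$, which closes the cycle: distinct generators yield non-equivalent processes, the GNS step recovers the triple from $L$ up to unitary equivalence, and the reconstruction shows the triple fixes every mixed moment $\phi(j_{s_1t_1}(b_1)\cdots j_{s_kt_k}(b_k))$, hence the process up to equivalence.

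I expect the main obstacle to be this third arrow: setting up the relevant (free or bosonic) quantum stochastic calculus carefully enough to prove that the solution of the structure equation really is a family of $*$-homomorphisms with independent increments — that the infinitesimal cocycle data genuinely exponentiates to an honest Lévy process. That is where all the analytic content of the theorem lies; the passage from a process to its generator and the GNS reconstruction of the triple from the generator are essentially algebraic and comparatively routine.
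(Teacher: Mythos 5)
This proposition is quoted by the paper from the literature (Theorem~$1.9$ of \cite{Fra}, going back to Sch\"urmann); the paper itself gives no proof, so there is nothing internal to compare you against. Your outline is, in substance, the standard proof of the representation theorem, and the three arrows you describe (process $\to$ generator by differentiating the convolution semigroup at $0$; generator $\to$ triple by the GNS-type construction on $\ker\delta$; triple $\to$ process by a Fock-space realization) are exactly the ones the cited sources use. Your formula $\langle a,b\rangle = L(a^*b)-\overline{\delta(a)}L(b)-L(a^*)\delta(b)$ is correct, since it equals $L(c^*d)$ with $c=a-\delta(a)1$, $d=b-\delta(b)1$ and $L(1)=0$.

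Two places where your sketch is thinner than the actual argument deserve flagging. First, the differentiability of $t\mapsto\phi_t(b)$ is not merely ``part of the theory'': for involutive bialgebras it rests on the fundamental theorem of coalgebras (every $b$ lies in a finite-dimensional subcoalgebra, so the semigroup restricts to a matrix semigroup), and for dual semigroups such as $U\langle n\rangle$ one must first transport the problem to a bialgebra. Second, and more importantly, your third arrow is written as a free quantum stochastic differential equation with a conservation term on full Fock space; in the dual-semigroup/free setting of \cite{Fra} the construction is not carried out that way, but by a symmetrization (or ``Lachs functor'') reduction of Lévy processes on dual semigroups to Lévy processes on a tensor bialgebra, to which Sch\"urmann's bosonic Fock-space theorem is then applied. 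Proving directly that your free structure equation has a unique solution consisting of unital $*$-homomorphisms with free increments is a substantial piece of work that your sketch only names. So the proposal is a faithful roadmap of the standard proof rather than a proof, with the genuinely hard analytic step correctly identified but not supplied.
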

\begin{defi}[Proposition $5.1.1$ from \cite{Sch}]
We say that a Lévy process on $U\langle n\rangle$ is gaussian if one of the following equivalent properties are verified:
\begin{itemize}
  \item For each $a,b,c\in Ker\delta$, we have $L(abc)=0$.
  \item For each $a,b\in Ker\delta$, we have $L(b^*a^*ab)=0$.
  \item For all $a,b,c\in U\langle n\rangle$ we have the following formula:
  \begin{eqnarray*}
    L(abc)&=&L(ab)\delta(c)+L(ac)\delta(b)+\delta(a)L(bc)-\delta(a)\delta(b)L(c)\\&-&\delta(a)\delta(c)L(b)-L(a)\delta(b)\delta(c)
  \end{eqnarray*}
  \item The representation $\pi$ is zero on $Ker\delta$: $\pi_{|Ker\delta}=0$.
  \item We have for each $a\in U\langle n\rangle$: $\pi(a)=\delta(a) Id$.
  \item For each $a,b$ in $Ker\delta$, we have: $\eta(ab)=0$.
  \item We have for all $a,b$ in $U\langle n\rangle$: $\eta(ab)=\delta(a)\eta(b)+\eta(a)\delta(b)$.
\end{itemize}
\end{defi}

\begin{thm}
Let us take $D=\mathcal{M}_n(\mathbb{C})$. We then define a Schürmann triple by setting:
  $$\eta(u_{jk})=\epsilon_{jk}/\sqrt{n},\eta(u_{jk}^*)=-\epsilon_{kj}\sqrt{n}$$
  $$\pi(u_{jk})=\delta_{jk}Id$$
  $$L(u_{jk})=-\frac{1}{2}\sum_{r=1}^n\langle\eta(u_{rj}^*),\eta(u_{rk})\rangle$$

where $\epsilon_{jk}$ describe the elementary matrices.\newline
Then, the Schürmann triple $(\eta,\pi,L)$ is associated to the Lévy process on $U\langle n\rangle$ we are interested in. 
\end{thm}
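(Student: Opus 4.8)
The plan is to use the one-to-one correspondence between Schürmann triples, generating functionals and Lévy processes (Theorem $1.9$ from \cite{Fra}): it will be enough to check (i) that $(\pi,\eta,L)$ really is a Schürmann triple on $(U\langle n\rangle,\delta)$, and (ii) that the generating functional it carries coincides with that of the process $J$ of Theorem \ref{maintheorem}, namely $b\mapsto\frac{\ddiff}{\ddiff t}\big(\phi\circ J_{0t}(b)\big)\big|_{t=0}$ (which is well defined and determines $J$ by the weak-continuity and increment axioms of a Lévy process).

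First I would verify that the triple is well defined. Since $\pi(u_{jk})=\delta_{jk}\,\mathrm{Id}$, a one-line computation gives $\sum_k\pi(u_{ki})^*\pi(u_{kj})=\delta_{ij}\,\mathrm{Id}=\sum_k\pi(u_{ik})\pi(u_{jk})^*$, so $\pi$ factors through the relations defining $U\langle n\rangle$ and is a unital $*$-representation; in particular $\pi$ vanishes on $\ker\delta$, so the triple is automatically gaussian (Proposition $5.1.1$ of \cite{Sch}), and both the cocycle identity for $\eta$ and the coboundary identity for $L$ collapse to $\eta(ab)=\delta(a)\eta(b)+\eta(a)\delta(b)$ and $L(ab)=\delta(a)L(b)+L(a)\delta(b)+\langle\eta(a^*),\eta(b)\rangle$. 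Hence $\eta$ and $L$ are each determined by their values on the generators, and the only point to check is that the prescriptions of the statement annihilate the defining ideal of $U\langle n\rangle$: applying $\eta$ to $\sum_k u_{ki}^*u_{kj}-\delta_{ij}$ and using the collapsed cocycle identity reduces this to the single identity $\eta(u_{ij})+\eta(u_{ji}^*)=0$, which holds for the stated $\eta$; the second family of relations reduces to the same identity, and hermiticity of $L$ (inherited from the symmetry of the inner product on $D=\mathcal M_n(\mathbb C)$) together with that identity makes $L$ vanish on the ideal as well, so $L$ descends and the coboundary identity holds by construction.

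Next I would match the generating functional of $(\pi,\eta,L)$ with that of $J$. Being gaussian, the functional of the triple is entirely determined by its values on the generators and by the bilinear form $(a,b)\mapsto\langle\eta(a^*),\eta(b)\rangle$ on $\ker\delta$ — equivalently by the first and second $*$-moments. These I would compute in two ways and compare: on the one hand from the triple, using the explicit $\eta$ and $L$; on the other hand from $J$, either by reading off $\frac{\ddiff}{\ddiff t}\phi_t(u_{ij})|_0$ and $\frac{\ddiff}{\ddiff t}\phi_t(u_{ij}u_{kl})|_0$ from the explicit solutions of Section $5$ ($\phi_t(u_{ii})=e^{-t/2}$, $\phi_t(u_{ij})=0$ for $i\ne j$, $\phi_t(u_{ij}u_{kl})=\frac{\delta_{ij}\delta_{kl}}{n}e^{-t}-t\,\delta_{il}\delta_{kj}e^{-t}$, and the $*$-variants), or, more conceptually, by differentiating $\phi(\Psi_{\alpha_1}\ldots\Psi_{\alpha_k})$ at $t=0$ via Lemma \ref{lmfondfree}: the drift term $-\frac k2\,\Psi_{\alpha_1}\ldots\Psi_{\alpha_k}\,\ddiff t$ produces at $t=0$ exactly the ``counit part'' of a gaussian generator, while the terms $-\frac1n\sum_{p<q}(-1)^{\epsilon_p+\epsilon_q}\zeta_{pq}\,\ddiff t$ produce at $t=0$ exactly its quadratic part for the prescribed $\eta$. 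Matching the two computations fixes the normalizing constants in $\eta$ and $L$ and shows the generating functionals agree; by the correspondence of \cite{Fra}, $(\pi,\eta,L)$ is then the Schürmann triple of $J$.

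The hard part will be the bookkeeping in the well-definedness step: $\eta$ and $L$ are prescribed only on generators and must be shown to descend through the ideal generated by $\sum_k u_{ki}^*u_{kj}-\delta_{ij}$ and $\sum_k u_{ik}u_{jk}^*-\delta_{ij}$. The gaussian collapse of the two defining identities is what makes this manageable, reducing it to the relation $\eta(u_{ij})=-\eta(u_{ji}^*)$ together with hermiticity of $L$; beyond that, care is needed to pin down the constants so that $L(u_{ii})=\frac{\ddiff}{\ddiff t}\phi_t(u_{ii})|_0=-\tfrac12$, which is where the precise normalization of the inner product on $\mathcal M_n(\mathbb C)$ comes in.
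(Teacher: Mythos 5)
Your overall framework is the paper's: check that $(\pi,\eta,L)$ is a Schürmann triple, identify its generating functional with $b\mapsto\frac{\ddiff}{\ddiff t}\phi_t(b)|_{t=0}$, and conclude via the correspondence of \cite{Fra}. Your well-definedness step is a genuine addition --- the paper never verifies that $\eta$ and $L$ descend through the ideal generated by $\sum_k u_{ki}^*u_{kj}-\delta_{ij}$ and $\sum_k u_{ik}u_{jk}^*-\delta_{ij}$ --- and your reduction of that check to $\eta(u_{ij})+\eta(u_{ji}^*)=0$ is correct. Note, however, that with the normalizations as literally printed ($\eta(u_{jk})=\epsilon_{jk}/\sqrt n$ but $\eta(u_{jk}^*)=-\epsilon_{kj}\sqrt n$) this identity \emph{fails} for $n>1$: one gets $(1/\sqrt n-\sqrt n)\epsilon_{ij}$. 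The check only goes through after correcting the second formula to $-\epsilon_{kj}/\sqrt n$, which is also what the paper's own computation of $L(u_{jk})=-\delta_{jk}/2$ implicitly uses; your instinct that ``the precise normalization of the inner product is where care is needed'' is exactly right, but you assert the identity holds when it does not as stated.

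The genuine gap is in your main matching step. You argue that the triple's functional, being gaussian, is determined by its first and second $*$-moments, and that it therefore suffices to compare these with those of $J$. Determination by second-order data is a statement \emph{within} the class of gaussian functionals: two functionals agreeing to order two need not agree at order three unless both satisfy the gaussian reduction formula $L(abc)=L(ab)\delta(c)+\dots$. The triple's functional is gaussian by construction ($\pi=\delta\cdot\mathrm{Id}$), but the gaussianity of the generating functional of $J$ is precisely the content of the \emph{next} theorem in the paper, which is deduced as a corollary of the present one; invoking it here is circular. Your fallback route --- differentiating $\phi(\Psi_{\alpha_1}\ldots\Psi_{\alpha_k})$ at $t=0$ via Lemma \ref{lmfondfree} for words of arbitrary length --- is the paper's actual proof: an induction on word length in which the coboundary identity reproduces the drift term $-\tfrac k2\delta_{i_1j_1}\cdots\delta_{i_kj_k}$ and the inner product $\langle\eta(u_{i_1j_1}^{1-\epsilon_1}),\eta(u_{i_2j_2}^{\epsilon_2}\cdots)\rangle$ must be shown, by a four-case analysis on $(\epsilon_1,\epsilon_p)$, to equal the switch terms $-\sum_p(-1)^{\epsilon_1+\epsilon_p}\Gamma_{(1,p,1)}$ of Lemma \ref{freesystem} evaluated at $\Psi_0=I$. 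That four-case computation is the substance of the theorem; in your proposal it is asserted (``produces exactly its quadratic part for the prescribed $\eta$'') rather than carried out, and it must be done for words of every length, not just length two.
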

\begin{proof}
We prove it by recurrence on the length of the words:\newline
\textbf{For the length $1$:} we have:
\begin{eqnarray*}
  L(u_{jk})&=&-\frac{1}{2}\sum_{r=1}^n\langle \epsilon_{rj},\epsilon_{rk}\rangle=-\frac{1}{2n}\sum_{r=1}^nTr(\epsilon_{jr}\epsilon_{rk})=-\delta_{jk}/2
\end{eqnarray*}
\textbf{Let us suppose the result is true for words of length up to $k$:} We must first find an expression for $\eta$. The cocycle property for $\eta$ allows us to find through an easy recurrence that:
\begin{eqnarray*}
  \eta(u_{i_1j_1}^{\epsilon_1}\ldots u_{i_kj_k}^{\epsilon_k})&=&\sum_{p=1}^k\delta_{i_1j_1}\ldots
  \begin{cases}
    \epsilon_{i_pj_p}&\text{ if }\epsilon_p=0\\
    -\epsilon_{j_pi_p}&\text{ if }\epsilon_p=1
  \end{cases}
  \ldots\delta_{i_kj_k}
\end{eqnarray*}
We can now use the coboundary property to write:
\begin{eqnarray*}
  L(u_{i_1j_1}^{\epsilon_1}\ldots u_{i_{k+1}j_{k+1}}^{\epsilon_{k+1}})&=&\epsilon(u_{i_2j_2}^{\epsilon_1}\ldots u_{i_{k+1}j_{k+1}}^{\epsilon_{k+1}})L(u_{i_1j_1}^{\epsilon_{k+1}})+L(u_{i_2j_2}^{\epsilon_2}\ldots u_{i_{k+1}j_{k+1}}^{\epsilon_{k+1}})\epsilon(u_{i_1j_1}^{\epsilon_1})\\
  &+&\langle\eta(u_{i_1j_1}^{1-\epsilon_1}),\eta(u_{i_2j_2}^{\epsilon_2}\ldots u_{i_kj_k}^{\epsilon_k}u_{i_{k+1}j_{k+1}}^{\epsilon_{k+1}})\rangle\\
  &=&-\frac{k}{2}\delta_{i_1j_1}\ldots\delta_{i_kj_k}\delta_{i_{k+1}j_{k+1}}\\&-&\sum_{2\leq p<q\leq k+1}(-1)^{\epsilon_p+\epsilon_q}\Gamma_{(p,q,1)}\delta_{i_{1}j_{1}}\\
  &-&\delta_{i_1j_1}\delta_{i_2j_2}\ldots\delta_{i_{k+1}j_{k+1}}/2\\
  &+&\clubsuit 
\end{eqnarray*}
where we have used the fact that the Brownian motion on $U(nd)$ at time $t=0$ is just $Id$. So we only have to compute the value of $\clubsuit$, which is the term arising from $\langle\eta(u_{i_1j_1}^{1-\epsilon_1}),\eta(u_{i_2j_2}^{\epsilon_2}\ldots u_{i_kj_k}^{\epsilon_k}u_{i_{k+1}j_{k+1}}^{\epsilon_{k+1}})\rangle$. We also remark that to finish our recurrence, it suffices to show that this $\clubsuit$ is equal to
$$-\sum_{2\leq p \leq k+1}(-1)^{\epsilon_p+\epsilon_{1}}\Gamma_{(1,p,1)}$$
So we may now write:
\begin{eqnarray*}
  &&\langle\eta(u_{i_1j_1}^{1-\epsilon_1}),\eta(u_{i_2j_2}^{\epsilon_2}\ldots u_{i_kj_k}^{\epsilon_k}u_{i_{k+1}j_{k+1}}^{\epsilon_{k+1}})\rangle\\&=&\frac{1}{n}\langle\begin{cases}-\epsilon_{j_1i_1}&\text{ if }\epsilon_1=0\\\epsilon_{i_1j_1}&\text{ if }\epsilon_1=1\end{cases},\sum_{p=2}^{k+1}\delta_{i_2j_2}\ldots
  \begin{cases}
    \epsilon_{i_pj_p}&\text{ if }\epsilon_p=0\\
    -\epsilon_{j_pi_p}&\text{ if }\epsilon_p=1
  \end{cases}
  \ldots\delta_{i_{k+1}j_{k+1}}\rangle\\
  &=&\sum_{p=2}^{k+1}\spadesuit_p
\end{eqnarray*}
We may now study the four cases:\newline
\textbf{Case where $\epsilon_1=\epsilon_p=0$:} we have
\begin{eqnarray*}
  \spadesuit_p&=&-\frac{1}{n}\delta_{i_1j_p}\delta_{i_2j_2}\ldots \delta_{i_pj_1}\ldots=-(-1)^{\epsilon_1+\epsilon_p}\Gamma_{(1,p,1)}
\end{eqnarray*}
\textbf{Case where $\epsilon_1=\epsilon_p=1$:} we have
\begin{eqnarray*}
  \spadesuit_p&=&-\frac{1}{n}\delta_{i_1j_p}\delta_{i_2j_2}\ldots \delta_{i_pj_1}\ldots=-(-1)^{\epsilon_1+\epsilon_p}\Gamma_{(1,p,1)}
\end{eqnarray*}
\textbf{Case where $\epsilon_1=0,=\epsilon_p=1$:} we have
\begin{eqnarray*}
  \spadesuit_p&=&\frac{1}{n}\delta_{i_1i_p}\delta_{i_2j_2}\ldots \delta_{j_pj_1}\ldots=-(-1)^{\epsilon_1+\epsilon_p}\Gamma_{(1,p,1)}
\end{eqnarray*}
\textbf{Case where $\epsilon_1=1,\epsilon_p=0$:} we have
\begin{eqnarray*}
  \spadesuit_p&=&\frac{1}{n}\delta_{i_1i_p}\delta_{i_2j_2}\ldots \delta_{j_pj_1}\ldots=-(-1)^{\epsilon_1+\epsilon_p}\Gamma_{(1,p,1)}
\end{eqnarray*}
Thus, we have proven the result by recurrence.
\end{proof}
\begin{thm}
The Lévy process from Theorem \ref{maintheorem} is gaussian.
\end{thm}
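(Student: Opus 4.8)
The plan is to read off gaussianity directly from the Schürmann triple $(\eta,\pi,L)$ exhibited in the preceding theorem, invoking the list of equivalent characterizations in Proposition $5.1.1$ of \cite{Sch}. Among those characterizations, the most economical to check is: \emph{for every $a\in U\langle n\rangle$ one has $\pi(a)=\delta(a)\,\mathrm{Id}$} (equivalently, $\pi$ vanishes on $Ker\,\delta$).

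First I would note that both $\pi$ and the map $\pi_0\colon a\mapsto \delta(a)\,\mathrm{Id}_{\mathcal{M}_n(\mathbb{C})}$ are unital $*$-homomorphisms from $U\langle n\rangle$ to $L(D)$ with $D=\mathcal{M}_n(\mathbb{C})$: the former because $(\pi,\eta,L)$ is a Schürmann triple, so $\pi$ is a unital $*$-representation by Definition \ref{gaussian}; the latter because $\delta$ is a $*$-character on $U\langle n\rangle$ (indeed $\pi_0$ respects the defining relations of $U\langle n\rangle$ since $\sum_k\delta_{ki}\delta_{kj}=\delta_{ij}$). Now $\pi(u_{jk})=\delta_{jk}\,\mathrm{Id}=\delta(u_{jk})\,\mathrm{Id}=\pi_0(u_{jk})$ for all $j,k$, so the two homomorphisms agree on the generators $u_{jk}$, hence on their adjoints $u_{jk}^*$, hence on the whole $*$-algebra these elements generate, which is all of $U\langle n\rangle$. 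Therefore $\pi(a)=\delta(a)\,\mathrm{Id}$ for every $a$, which is precisely one of the equivalent forms of gaussianity, and we are done.

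There is essentially no obstacle here: all the substantive work went into constructing the Schürmann triple in the previous theorem, and the decisive point is exactly that its representation component could be chosen scalar, $\pi(u_{jk})=\delta_{jk}\,\mathrm{Id}$. As an alternative route one could bypass $\pi$ entirely and verify the condition $\eta(ab)=\delta(a)\eta(b)+\eta(a)\delta(b)$ on generators using the explicit cocycle formula for $\eta$ established in the proof of the previous theorem, but the argument through $\pi$ is shorter and avoids any computation.
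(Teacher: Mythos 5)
Your proof is correct and follows exactly the paper's route: the paper also invokes the fifth characterization $\pi(a)=\delta(a)\,\mathrm{Id}$ from Definition \ref{gaussian}, observing that the representation in the constructed Schürmann triple is scalar on the generators. You merely spell out the (routine) extension from generators to all of $U\langle n\rangle$, which the paper leaves implicit.
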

\begin{proof}
It is immediate by using the fifth characterization from Definition \ref{gaussian}.

\end{proof}

Our Lévy process is thus a good candidate to define what we would like to call a Brownian motion on $U\langle n\rangle$.
\section{Conclusion}
We have proven in this article a generalization of Biane's result, namely that the Brownian motion on $U\left(nd\right)$, seen block-wise, converges towards a Lévy process on the Unitary Dual Group $U\langle n\rangle$, as $d$ goes to infinity. Biane's result can thus be seen as a Lévy process on $U\langle 1\rangle$.The proof of our generalized result uses quite elementary tools, ie mainly the convergence of systems of differential equations and combinatorial considerations.\newline
This limit free Lévy process is described by using a free stochastic differential equation whose form is similar to the equation of the Brownian motion on $U(nd)$. A natural question would be to know if other classical matricial Lévy processes arising from (classical) stochastic equations yield (free) Lévy process described by a similar (free) stochastic equation. \newline
Also, this free Lévy process seems to be a good definition for a Brownian motion on our dual group $U\langle n \rangle$.

\nocite{*}
\bibliography{biblio}
\bibliographystyle{plain}

\end{document}